\documentclass[11pt]{amsart}
\usepackage[top=1.5in, bottom=1.5in, left=1.5in, right=1.5in]{geometry}
\usepackage{amsmath}
\usepackage{amssymb}
\usepackage{thmtools}
\usepackage{hyperref}
\usepackage{amsthm}
\usepackage{mathtools}
\usepackage[capitalize,noabbrev]{cleveref}
\usepackage{xcolor}
\usepackage{rotating}
\usepackage{tikz,tikz-cd}
\usetikzlibrary{arrows.meta,calc,positioning,shapes.geometric,decorations.markings}
\usepackage{ytableau}
\ytableausetup{boxsize=1em}

\allowdisplaybreaks

\numberwithin{equation}{section}

\newtheorem{thm}{Theorem}[section]
\newtheorem{lemma}[thm]{Lemma}

\newtheorem{conj}[thm]{Conjecture}

\theoremstyle{definition}

\newtheorem{definition}[thm]{Definition}
\newtheorem{example}[thm]{Example}
\newtheorem{remark}[thm]{Remark}

\crefname{thm}{Theorem}{Theorems}
\crefname{lemma}{Lemma}{Lemmas}
\crefname{cor}{Corollary}{Corollaries}
\crefname{prop}{Proposition}{Propositions}
\crefname{conj}{Conjecture}{Conjectures}

\crefname{definition}{Definition}{Definitions}
\crefname{example}{Example}{Examples}
\crefname{remark}{Remark}{Remarks}
\crefname{question}{Question}{Questions}
\crefname{problem}{Problem}{Problems}

\newcommand{\emailhref}[1]{\email{\href{#1}{#1}}}
\newcommand{\dfn}[1]{\textcolor{blue}{\emph{#1}}}

\newcommand{\BK}{\mathsf{BK}}
\newcommand{\dom}{\mathsf{dom}}

\title[Cyclic Sieving for Staircase Plane Partitions]{Cyclic Sieving for Staircase Plane Partitions \\ via Crystals and Electrical Networks}
\author{Sam Hopkins}\emailhref{samuelfhopkins@gmail.com}
\address{Department of Mathematics, Howard University, Washington, DC, USA}
\author{Jesse Kim}\emailhref{jesse.kim@ufl.edu}
\address{Department of Mathematics, University of Florida, Gainesville, FL, USA}
\author{Stephan Pfannerer}\emailhref{stephan@pfannerer-mittas.net}
\address{Dept.~of Combinatorics and Optimization, University of Waterloo, ON, Canada}

\begin{document}

\begin{abstract}
We prove a cyclic sieving result for the action of promotion on the staircase plane partitions of height two. Our proof has two major algebraic inputs: an interpretation of this promotion action in terms of tensor powers of the spin crystal that was recently studied by Pappe--Pfannerer--Schilling--Simone, and the bush basis of the degree two part of the coordinate ring of the space of electrical networks that was recently introduced by Gao--Lam--Xu. Moreover, we explain how the existence of an electrical canonical basis in all degrees would yield cyclic sieving for promotion of staircase plane partitions of all heights.
\end{abstract}

\maketitle

\section{Introduction} \label{sec:intro}

Let $P$ be a finite poset. For an integer $m \geq 1$, a \dfn{$P$-partition of height $m$} is an order-preserving map $P\to \{0,1,\ldots,m\}$. We use $\mathcal{PP}^m(P)$ to denote the set of $P$-partitions of height $m$, and define the \dfn{order polynomial} $\Omega_P(m)$ of $P$ to be the polynomial in~$m$ for which $\Omega_P(m)=\#\mathcal{PP}^m(P)$.\footnote{This differs from the usual definition of the order polynomial, as in \cite[\S3]{stanley2012ec1}, by a shift by one, but it leads to cleaner formulas.}

\dfn{(Piecewise-linear) rowmotion} is an invertible operator acting on the order polytope of a finite poset~$P$ which was introduced about ten years ago by Einstein and Propp~\cite{einstein2014piecewiselinear,einstein2021combinatorial} and has subsequently received significant attention. For any $m \geq 1$, by restricting to the rational points in the order polytope with denominator dividing $m$, we can consider (piecewise-linear) rowmotion as an invertible operator $\mathrm{Row}\colon \mathcal{PP}^m(P) \to \mathcal{PP}^m(P)$ acting on height $m$ $P$-partitions. While this rowmotion action is defined for any finite poset, it is only for special families of posets that it exhibits good behavior. In~\cite{hopkins2024order}, the first author put forward the following meta-conjecture concerning when rowmotion of $P$-partitions behaves well. 

\begin{conj}[{\cite{hopkins2024order}}] \label{conj:main}
Let $P$ be a finite graded poset of rank $r$ for which the roots of $\Omega_P(m)$ are all integers or half-integers, and in the latter case possessing an order two automorphism. Let $m \geq 1$ be an integer and define
\[\Omega_P(m;q) := \prod_{\alpha} \frac{1-q^{\kappa(m-\alpha)}}{1-q^{-\kappa \alpha}},\]
a product over all the roots $\alpha$ of $\Omega_P(m)$ with multiplicity, and  with $\kappa := 1$ if the roots are integers or $\kappa := 2$ if they are half-integers. Then~$\Omega_P(m;q)$ is a polynomial in $q$ with nonnegative integer coefficients, and $(\mathcal{PP}^m(P),\langle \mathrm{Row} \rangle \simeq \mathbb{Z}/\kappa(r+2)\mathbb{Z},\Omega_P(m;q))$ exhibits \dfn{cyclic sieving}.
\end{conj}

For background on the \dfn{cyclic sieving phenomenon} (CSP), consult~\cite{reiner2004cyclic,sagan2011cyclic}. Note in particular that when we have a CSP where the sieving polynomial has a product formula as a rational function, \emph{every} symmetry class under the action is enumerated by a product formula. So \cref{conj:main} says that rowmotion of $P$-partitions for these special families of posets $P$ is very well behaved indeed.

\begin{figure}
\begin{center}\scalebox{0.9}{
\parbox{0.75in}{\begin{center}\ydiagram{4,4,4}\end{center}} \quad \parbox{0.75in}{\begin{center}\ydiagram{4,3,2,1}\end{center}} \quad \parbox{0.75in}{\begin{center}\ydiagram{4,1+3,2+2,3+1}\end{center}} \quad \parbox{1.25in}{\begin{center}\ydiagram{7,1+5,2+3}\end{center}} \quad \parbox{0.75in}{\begin{center}\begin{tikzpicture}[scale=0.5]
        \node at (0,2) {};
	\node[shape=circle,fill=black,inner sep=1.5] (A1) at (0,-1) {};
	\node[shape=circle,fill=black,inner sep=1.5] (B1) at (-1,0) {};
	\node[shape=circle,fill=black,inner sep=1.5] (C1) at (1,0) {};
	\node[shape=circle,fill=black,inner sep=1.5] (A2) at (0,0) {};
	\node[shape=circle,fill=black,inner sep=1.5] (B2) at (-1,1) {};
	\node[shape=circle,fill=black,inner sep=1.5] (C2) at (1,1) {};
	\node[shape=circle,fill=black,inner sep=1.5] (A3) at (0,1) {};
	\node[shape=circle,fill=black,inner sep=1.5] (B3) at (-1,2) {};
	\node[shape=circle,fill=black,inner sep=1.5] (C3) at (1,2) {};
	\draw[thick] (B1)--(A1);
	\draw[thick] (C1)--(A1);
	\draw[thick] (A1)--(A2);
	\draw[thick] (B1)--(B2);
	\draw[thick] (C1)--(C2);
	\draw[thick] (B2)--(A2);
	\draw[thick] (C2)--(A2);
	\draw[thick] (A2)--(A3);
	\draw[thick] (B2)--(B3);
	\draw[thick] (C2)--(C3);
	\draw[thick] (B3)--(A3);
	\draw[thick] (C3)--(A3);
\end{tikzpicture}\end{center}}}
\end{center}
\caption{Examples of the families of posets to which \cref{conj:main} applies.
} \label{fig:exs}
\end{figure}

Families of posets to which \cref{conj:main} applies include the \dfn{rectangles}, \dfn{staircases}, \dfn{shifted staircases}, \dfn{trapezoids}, and \dfn{chains of $V$'s}. Examples of these families of posets are depicted in \cref{fig:exs}. Note that rectangles and staircases are Young diagram shapes, shifted staircases and trapezoids are shifted shapes, and chains of~$V$'s are neither. Also, $\kappa=2$ for staircases and chains of $V$'s, and $\kappa=1$ for the others.

For $P$ a \dfn{root poset} (including staircases and maximal trapezoids), the case $m=1$ of \cref{conj:main} was proved by Armstrong, Stump, and Thomas~\cite{armstrong2013uniform}. For $P$ a \dfn{minuscule poset} (including rectangles and shifted staircases), the case $m=1$ of \cref{conj:main} was proved by Rush and Shi~\cite{rush2013orbits}. For all $m \geq 1$ and $P$ a rectangle, \cref{conj:main} was essentially proved by Rhoades~\cite{rhoades2010cyclic} (see also~\cite{hopkins2020cyclic}). Johnson and Liu~\cite{johnson2023plane} established an equivalence of rowmotion for rectangles and trapezoids, which combined with Rhoades's result shows that \cref{conj:main} is true also for all~$m\geq 1$ when $P$ is trapezoid.

As far as we know, those are all cases of \cref{conj:main} which have previously been proved. See~\cite[\S5]{hopkins2024order} for more details and references. Here we prove a new case of \cref{conj:main}. Let $\delta_n=(n,n-1,\ldots,1)$ denote the staircase Young diagram shape poset. We have, for integers $n \geq 2$ and $m \geq 1$, that
\begin{equation} \label{eqn:q-multi-cat}
    \Omega_{\delta_n}(m;q) = \prod_{1\leq i \leq j \leq n} \frac{1-q^{i+j+2m}}{1-q^{i+j}}
\end{equation}
which is known to be a polynomial in $q$ with nonnegative integer coefficients by work of Proctor~\cite{proctor1988odd, proctor1990symmetric}. \Cref{conj:main} for $P=\delta_n$ a staircase was also conjectured independently by Pappe et al.~in \cite[Conjecture~4.23]{pappe2024promotion}.

\begin{remark}
The polynomial $\Omega_{\delta_n}(m;q)$ is often called the \dfn{$q$-multi-Catalan number}, and it is also conjectured that this same polynomial is a cyclic sieving polynomial for the action of rotation on ``multi-triangulations''~\cite{serrano2012maximal,ceballos2014subword}. However, the orders of rotation of multi-triangulations and of rowmotion of $P$-partitions for $P=\delta_n$ are not the same (even for $m=1$), so we do not know any direct connection between these two CSP conjectures.
\end{remark}

As mentioned, for $P=\delta_n$, the case $m=1$ of \cref{conj:main} was proved in~\cite{armstrong2013uniform}. Our main theorem addresses the case of $m=2$:

\begin{thm} \label{thm:main}
\Cref{conj:main} is true when $P = \delta_n$ is a staircase and $m=2$.
\end{thm}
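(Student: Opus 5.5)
We follow the strategy announced in the abstract: translate rowmotion of $\mathcal{PP}^2(\delta_n)$ into promotion on highest weight words in a tensor power of the spin crystal, and then realize that promotion action on a basis of an invariant space by a Coxeter-type element of order $2(n+2)$. We then evaluate characters with a Springer-type eigenvalue argument.

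\textbf{Step 1 (combinatorial translation).} First we replace rowmotion by promotion. By standard toggle-group arguments (conjugacy of rowmotion and promotion for $P$-partitions of graded posets), it suffices to prove the CSP for piecewise-linear promotion on $\mathcal{PP}^m(\delta_n)$. Following Pappe--Pfannerer--Schilling--Simone, we identify $\mathcal{PP}^m(\delta_n)$ with the set of highest weight (invariant) words of length $N=2(n+2)$ (up to the appropriate normalization) in $\mathrm{Spin}^{\otimes N}$ for a type $B$ or $D$ spin crystal of rank depending on $m$. Under this identification, promotion becomes crystal promotion, that is, rotation of tensor factors composed with the crystal commutor. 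For $m=2$ this should be the rank-two case, where the relevant group is $\mathrm{Spin}(5)\cong \mathrm{Sp}(4)$, or, via the exceptional isomorphism, the $\mathrm{Sp}$/Lagrangian side relevant to electrical networks.

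\textbf{Step 2 (algebraic model).} Next we need a basis of the invariant space $\left(V^{\otimes N}\right)^{G}$ that the long cycle permutes up to a uniform sign. Gao--Lam--Xu show that the degree two part of the coordinate ring of the electrical Lagrangian Grassmannian $\mathrm{LG}(n-1,2n)$ (the space of electrical networks on $2n$ boundary points) has a ``bush basis''. Its key properties are that it is indexed by the same combinatorial objects, is permuted by the cyclic symmetry of the network up to sign, and that the cyclic action on the ring is induced by an explicit element of $\mathrm{Sp}$ of finite order. We will
\begin{enumerate}
\item build an explicit bijection between bush basis elements and $\mathcal{PP}^2(\delta_n)$;
\item check that this bijection intertwines promotion with the cyclic rotation of the bush basis.
\end{enumerate}
The main obstacle is this compatibility. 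We plan to establish it by showing that the crystal promotion of Step 1 agrees with the combinatorial rotation of bushes (noncrossing-type diagrams), using local rules on both sides, and by tracking signs carefully.

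\textbf{Step 3 (character evaluation).} Once we have a basis permuted by an operator $c$ of order $\kappa(r+2)=2(n+2)$, up to controllable signs, the number of elements fixed by promotion$^d$ equals the trace of $c^d$ on the degree two component, after sign correction. The degree two component is a representation of the appropriate group restricted to the cyclic subgroup generated by $c$. We expect $c$ to be regular in Springer's sense, so its eigenvalues are explicit roots of unity. The trace is then a principal-specialization-type evaluation of a Weyl character, and the $q$-Weyl dimension formula gives exactly $\Omega_{\delta_n}(2;\zeta^d)$ in the product form~\eqref{eqn:q-multi-cat} with $m=2$. To finish, we verify that the degree-two graded piece matches the symplectic character $\chi_{2\omega}$ of the correct rank, and that the sign twist equals $1$ on the trace. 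The latter should follow from a parity count of the crossings in bushes.
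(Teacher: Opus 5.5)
Your outline has the same architecture as the paper: promotion as crystal promotion on spin invariants, the $B_2\cong C_2$ coincidence, the Gao--Lam--Xu bush basis, and a principal specialization of a symplectic character. However, Step 2, which you flag as the main obstacle, is where the proof actually lives, and you do not supply it. You never say what indexes the bush basis, so the bijection in (1) has no candidate. Also, ``local rules on both sides'' is not a mechanism: rotation of bushes is not computed by local rules, and the equivariance you need is a genuine theorem. The missing ingredients are these.
\begin{enumerate}
\item The bush basis $\{B_\xi\}$ is indexed by $3$-noncrossing perfect matchings, and $c$ sends $B_\xi$ to $B_{\mathrm{rot}(\xi)}$. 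This holds because $c$ acts on grove coordinates $L_\sigma$ by Kreweras complement, and the coefficients expanding $L_\sigma L_{\sigma'}$ in the bush basis are invariant under simultaneous Kreweras complement and rotation.
\item On the crystal side, applying $\psi$ factorwise turns weight-zero highest weight words in $(B^{\mathrm{spin}}_2)^{\otimes(2n+2)}$ into those of $(C^{\mathrm{vec}}_2)^{\otimes(2n+2)}$, i.e.\ $2$-symplectic oscillating tableaux.
\item Sundaram's bijection, in Roby's growth-diagram form, maps these tableaux to $3$-noncrossing perfect matchings of $[2n+2]$ while intertwining crystal promotion with rotation. This is a result of Pfannerer--Rubey--Westbury about the commutor.
\end{enumerate}
Step 2 also conflates two different spaces. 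One is the $\mathrm{Sp}_4$-invariant space $(V^{\otimes N})^G$, on which the long cycle permutes tensor factors. The other is the degree-two piece of the grove algebra, an irreducible representation of $\mathrm{Sp}_{2n}$. Your Step 3 applies only to the latter, and the two are connected only through the combinatorial bijection above.

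You also need to fix the indexing and the sign bookkeeping. The poset $\delta_n$ has rank $n-1$, so the cyclic group has order $\kappa(r+2)=2(n+1)$, not $2(n+2)$. Correspondingly, $\mathcal{PP}^2(\delta_n)$ matches $2$-fans of semilength $n+1$ (words of length $2n+2$). The relevant variety is $\mathcal{X}_{n+1}\cong\mathrm{LG}(n,2n)$, coming from networks with $n+1$ boundary vertices. With $2(n+2)$ the claim is false: for $n=2$ there are $14$ objects and promotion has order $6$. As for signs, $c$ permutes the bush basis on the nose. The actual correction is that $c$ is only a symplectic similitude, $c\Lambda c^T=-\Lambda$, so one works with $ic\in\mathrm{Sp}_{2n}$. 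Its eigenvalues $\zeta^{\pm1},\dots,\zeta^{\pm n}$, with $\zeta=e^{2\pi i/(2n+2)}$, come directly from its characteristic polynomial, so no Springer theory is needed. The central scalar $i$ then contributes a root of unity that exactly cancels the monomial prefactor in Proctor's evaluation of $\mathrm{Sp}_{2n}(2\omega_n;q,\dots,q^n)$. A parity count of crossings in bushes plays no role. And if the basis really were permuted only up to sign, you would need the sign to be $+1$ on each fixed element, not merely on the trace.
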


A few terminological remarks concerning \cref{thm:main} are in order. First of all, when $P = a \times b$ is a rectangle, $P$-partitions are called \dfn{plane partitions}. So for~$P=\delta_n$ a staircase, we refer to $P$-partitions as \dfn{staircase plane partitions}. Also, rather than consider rowmotion, which is a composition of \dfn{toggles} (local involutions) from ``top-to-bottom,'' in what follows we instead work with \dfn{promotion}, which is the composition of these toggles from ``left-to-right.'' Pioneering work of Striker and Williams~\cite{striker2012promotion} showed that rowmotion and promotion are always conjugate, so their orbit structures are the same. Hence our main result, \cref{thm:main}, can be restated as a cyclic sieving result for promotion of staircase plane partitions of height two. Let us now briefly explain our approach to understanding promotion of staircase plane partitions.

It is helpful to first review the proof of Rhoades's CSP result for promotion of usual (i.e., rectangular) plane partitions. One way to interpret what Rhoades did in~\cite{rhoades2010cyclic} is as follows (see also~\cite{hopkins2020cyclic} and especially~\cite[Thm.~2.2]{lam2019cyclic}). Fix $a, b \geq 1$ and let $P=a\times b$ be the $a \times b$ rectangle, so that~$\mathcal{PP}^m(a\times b)$ is the set of $a \times b$ plane partitions of height~$m$. The degree~$m$ part of the homogeneous \dfn{coordinate ring} of the \dfn{Grassmannian} $\mathrm{Gr}(a,a+b)$ of $a$-planes in~$\mathbb{C}^{a+b}$ has a basis indexed by the elements of $\mathcal{PP}^m(a\times b)$. In fact, it has many such bases, but a particularly nice one is the Lusztig/Kashiwara \dfn{dual canonical basis}. Also, $\mathrm{Gr}(a,a+b)$ carries an action of the general linear group $\mathrm{GL}(a+b)$, and so its coordinate ring does as well. Rhoades showed that a particular lift of the long cycle in the symmetric group $S_{a+b}$ to $\mathrm{GL}(a+b)$ acts as promotion of plane partitions on the dual canonical basis. Then a standard character computation yields~\cref{conj:main} in this case.

Our proof of~\cref{thm:main} follows a similar structure, where we realize the action of promotion algebraically in terms of a nice basis of the coordinate ring of a homogeneous space. The space in question is the \dfn{space of electrical networks}~\cite{lam2018electroid,bychkov2023electrical,chepuri2026electrical}. In~\cite{gao2025electrical}, the coordinate ring of the space of electrical networks is termed the \dfn{grove algebra}. The basis we use to prove our CSP is a basis of the degree two part of the grove algebra called the \dfn{bush basis}, which was recently introduced by Gao, Lam, and Xu~\cite[\S4]{gao2025electrical}.

In order to relate promotion of height two staircase plane partitions to the bush basis, we use another algebraic incarnation of promotion. Pfannerer, Rubey and Westbury~\cite{pfannerer2020promotion} explained how the Henriques--Kamnitzer~\cite{henriques2006crystals} cactus group action on the \dfn{tensor product} of \dfn{crystals} gives a promotion operator on these tensor products and their invariants. Pappe, Pfannerer, Schilling, and Simone~\cite{pappe2024promotion} further explained how, in the particular case of tensor powers of the \dfn{spin representation} of a spin group, this crystal promotion operator is promotion of staircase plane partitions. Moreover, because of the exceptional isomorphism of the rank two Lie algebras~$B_2$ and $C_2$, this crystal interpretation also allows us to show that promotion of height two staircase plane partitions is the same as \dfn{rotation} of \dfn{$3$-noncrossing perfect matchings}. These matchings index the bush basis, and rotation corresponds to a natural cyclic action on the basis. Together with a standard character computation for a symplectic group representation, this completes our proof.

\Cref{fig:proof-overview} gives an outline of the proof of \cref{thm:main}, including the bijections, cyclic actions, and later definitions or results used for each compatibility.

\begin{sidewaysfigure}[p]
\centering
\vspace*{0.5\textwidth}
\resizebox{0.95\textheight}{!}{
\begin{tikzpicture}[
  x=5.05cm,
  y=2.15cm,
  obj/.style={draw,rounded corners=2pt,align=center,text width=3.05cm,minimum height=.9cm,font=\footnotesize},
  wide/.style={obj,text width=3.55cm},
  edge/.style={-{Stealth[length=4mm,width=2.5mm]},line width=1.05pt,shorten >=3pt,shorten <=3pt},
  act/.style={-{Stealth[length=3mm,width=2mm]},line width=.85pt,shorten >=1pt,shorten <=1pt},
  lab/.style={font=\scriptsize,align=center,fill=white,inner sep=2.2pt}
]
\node[wide] (row) at (0,0) {$\mathcal{PP}^2(\delta_{n-1})$\\staircase plane partitions};
\node[wide] (pr) at (1,0) {$\mathcal{PP}^2(\delta_{n-1})$\\same set};
\node[obj] (fan) at (2,0) {$\mathcal D^2_n$\\$2$-fans};
\node[obj] (b) at (3,0) {$B_2$ spin crystal\\$\operatorname{HW}_0$};

\node[obj] (c) at (3,-1.75) {$C_2$ vector crystal\\$\operatorname{HW}_0$};
\node[wide] (osc) at (2,-1.75) {$2$-symplectic\\oscillating tableaux};
\node[wide] (mat) at (1,-1.75) {$\mathcal{TC}(2n)$\\$3$-noncrossing matchings};
\node[obj] (basis) at (0,-1.75) {bush basis\\$\{B_\xi\}$};

\node[obj] (alg) at (0,-3.5) {$G_{2,n}$\\algebraic side};
\node[wide] (trace) at (1,-3.5) {$\#\operatorname{Fix}(\mathrm{Pr}^k)$\\$=\operatorname{tr}(c^k\mid G_{2,n})$};
\node[wide] (char) at (2,-3.5) {$\mathrm{Sp}_{2n-2}(2\omega_{n-1};$\\$\zeta^k,\zeta^{2k},\dots)$};
\node[wide] (prod) at (3,-3.5) {$\Omega_{\delta_{n-1}}(2;\zeta^k)$\\product formula};

\draw[act] ($(row.north west)!0.25!(row.north east)$) to[out=130,in=50,looseness=1.35] node[lab,above] {$\mathrm{Row}$\\Eq.~(\ref{eq:rowmotion-definition})} ($(row.north west)!0.75!(row.north east)$);
\draw[act] ($(pr.north west)!0.25!(pr.north east)$) to[out=130,in=50,looseness=1.35] node[lab,above] {$\mathrm{Pr}$\\Eq.~(\ref{eq:promotion-definition})} ($(pr.north west)!0.75!(pr.north east)$);
\draw[act] ($(fan.north west)!0.25!(fan.north east)$) to[out=130,in=50,looseness=1.35] node[lab,above] {$\mathrm{Pr}$\\Eq.~(\ref{eq:promotion-diag})} ($(fan.north west)!0.75!(fan.north east)$);
\draw[act] ($(b.north west)!0.25!(b.north east)$) to[out=130,in=50,looseness=1.35] node[lab,above] {crystal $\mathrm{Pr}$\\Eq.~(\ref{eq:crystal commutor})} ($(b.north west)!0.75!(b.north east)$);
\draw[act] ($(c.south west)!0.75!(c.south east)$) to[out=-50,in=-130,looseness=1.35] node[lab,below] {crystal $\mathrm{Pr}$\\Eq.~(\ref{eq:crystal commutor})} ($(c.south west)!0.25!(c.south east)$);
\draw[act] ($(osc.south west)!0.75!(osc.south east)$) to[out=-50,in=-130,looseness=1.35] node[lab,below] {crystal $\mathrm{Pr}$\\Eq.~(\ref{eq:crystal commutor})} ($(osc.south west)!0.25!(osc.south east)$);
\draw[act] ($(mat.south west)!0.75!(mat.south east)$) to[out=-50,in=-130,looseness=1.35] node[lab,below] {$\operatorname{rot}$\\Lem.~\ref{lem:map to matchings}} ($(mat.south west)!0.25!(mat.south east)$);
\draw[act] ($(basis.south west)!0.75!(basis.south east)$) to[out=-50,in=-130,looseness=1.35] node[lab,below] {$c$\\Thm.~\ref{thm:bushbasisexpansion}} ($(basis.south west)!0.25!(basis.south east)$);
\draw[act] ($(alg.south west)!0.75!(alg.south east)$) to[out=-50,in=-130,looseness=1.35] node[lab,below] {$c$\\Lem.~\ref{lem:char_comp}} ($(alg.south west)!0.25!(alg.south east)$);

\draw[edge] (row) -- node[lab,below=12pt] {toggle conjugacy $T$\\Thm.~\ref{thm:striker_williams}} (pr);
\draw[edge] (pr) -- node[lab,below=12pt] {$\Phi$\\Lem.~\ref{lem:pp_dyck_bij}} (fan);
\draw[edge] (fan) -- node[lab,below=12pt] {increments $b$\\Lem.~\ref{lem:promotion_crystal}} (b);
\coordinate (bc-turn) at ($(b.east)+(.45,0)$);
\coordinate (bc-mid) at ($(b.east)!0.5!(c.east)+(.45,0)$);
\draw[edge] (b.east) -- (bc-turn) |- (c.east);
\node[lab,right] at (bc-mid) {$\psi^{\otimes 2n}$\\Fig.~\ref{fig:B2-C2-crystal-isomorphism}};

\draw[edge] (c) -- node[lab,above=7pt] {word--tableau\\Eq.~(\ref{eq:word-tableau-dictionary})} (osc);
\draw[edge] (osc) -- node[lab,above=7pt] {Sundaram $\mathcal M$\\Def.~\ref{def:sundaram-map}\\Lem.~\ref{lem:map to matchings}} (mat);
\draw[edge] (mat) -- node[lab,above=7pt] {$\xi\mapsto B_\xi$\\Thm.~\ref{thm:bushbasisexpansion}} (basis);
\coordinate (ba-turn) at ($(basis.west)+(-.45,0)$);
\coordinate (ba-mid) at ($(basis.west)!0.5!(alg.west)+(-.45,0)$);
\draw[edge] (basis.west) -- (ba-turn) |- (alg.west);
\node[lab,left] at (ba-mid) {basis spans\\Thm.~\ref{thm:bushbasisexpansion}};

\draw[edge] (alg) -- node[lab,below=12pt] {permutation basis\\Lem.~\ref{lem:bush}} (trace);
\draw[edge] (trace) -- node[lab,below=12pt] {\tiny character computation\\Lem.~\ref{lem:char_comp}} (char);
\draw[edge] (char) -- node[lab,below=12pt] {Proctor specialization\\Eq.~(\ref{eqn:q-multi-cat})} (prod);
\end{tikzpicture}
}
\caption{Outline of the proof of \cref{thm:main}.}
\label{fig:proof-overview}
\end{sidewaysfigure}

Gao--Lam--Xu~\cite[Conjecture~1.4]{gao2025electrical} conjectured the existence of an \dfn{electrical canonical basis} of the grove algebra in all degrees. As we explain below, the existence of this electrical canonical basis would yield a proof of \cref{conj:main} for all $m \geq 1$ when $P=\delta_n$.

The rest of the paper is structured as follows. In \cref{sec:comb_background}, we go over combinatorial background on rowmotion/promotion of staircase plane partitions, a.k.a., fans of Dyck paths. In~\cref{sec:crystals}, we discuss the crystal perspective, and explain how it yields that promotion of height two plane partitions is in equivariant bijection with rotation of $3$-noncrossing perfect matchings. In~\cref{sec:electrical}, we discuss the space of electrical networks and its coordinate ring, the grove algebra. We use the existence of an electrical canonical basis in degree two, the bush basis, to complete the proof of \cref{thm:main} in this section. Finally, in \cref{sec:future}, we discuss future directions.

\begin{remark} 
We know a different way to prove \cref{thm:main} that avoids electrical networks altogether and just uses crystals. Namely, Westbury~\cite{westbury2016invariant} proved a general CSP result for invariant tensors of crystals. Moreover, in the case of interest to us where the action is rotation of $3$-noncrossing perfect matchings, Rubey and Westbury~\cite[Theorem 3.5]{rubey2015combinatorics} showed that the sieving polynomial in question is a certain explicit sum of principal specializations of Schur polynomials. (They used work of Sundaram~\cite{sundaram1986combinatorics} to obtain this sum formula.) A result of Krattenthaler~\cite[Theorem 2]{krattenthaler1995major} then shows that this sum is the same as the product formula appearing in \cref{conj:main}. However, we prefer the approach we take in this paper, using the space of electrical networks, for a few reasons. First, the appearance of the product formula in \cref{conj:main} is more transparent in our approach. Second, we believe our approach should generalize. Indeed, we already mentioned how the existence of an electrical canonical basis would yield the CSP for higher height staircase plane partitions. Even more generally, the paradigm of realizing a combinatorial action of interest via a nice basis of a coordinate ring of a homogeneous variety might work to address other cases of \cref{conj:main}.
\end{remark}

\begin{remark}
On the other hand, we also believe there is a way to prove \cref{thm:main} that avoids the use of crystals. Namely, there is an explicit bijection between staircase plane partitions of height two and $3$-noncrossing perfect matchings due to Chen, Deng, Du, Stanley, and Yan~\cite{chen2007crossings}. We believe that it can be shown in a purely combinatorial manner that this bijection sends promotion to rotation. However, we prefer to include a discussion of crystals because they provide another interesting algebraic avatar of promotion. Moreover, we wish to highlight the mysterious ``duality'' between invariant tensors in powers of spin representations and the coordinate ring of the Lagrangian Grassmannian.
\end{remark}

\subsection*{Acknowledgments}

The first author thanks Zach Hamaker for putting him in touch with the second author. The first author was partially supported by a Simons Foundation travel grant. The third author was partially supported by Discovery Grants RGPIN-2021-02391 and RGPIN-2021-02568 from NSERC.

\section{Combinatorial background} \label{sec:comb_background}

\subsection{Staircase plane partitions, rowmotion and promotion}

Fix positive integers, $n, m \geq 1$. A \dfn{staircase plane partition} of size $n$ and height $m$ is a triangular array of nonnegative integers
\[
\pi = (\pi_{i,j}) \quad \text{for } 1 \leq i,j \leq n \text{ and } i+j\le n+1
\]
such that: $\pi_{i,j} \geq \pi_{i+1,j}$ for all $(i,j)$; $\pi_{i,j} \geq \pi_{i,j+1}$ for all $(i,j)$; and $\pi_{1,1} \leq m$. As in the introduction, we denote the set of staircase plane partitions of size $n$ and height $m$ by~$\mathcal{PP}^m(\delta_n)$. We view an element of $\mathcal{PP}^m(\delta_n)$ as a triangular array of left-justified boxes that has $n$ boxes in the first row, $n-1$ boxes in the second row, and so on down to one box in the $n$th row, filled with nonnegative integers that weakly decrease along rows and down columns. We use matrix notation, so position~$(i,j)$ means the box in the $i$th row and $j$th column from the top left.

The \dfn{toggle} at position $(i,j)$ is a piecewise-linear involution on the set of staircase plane partitions $\tau_{i,j}\colon \mathcal{PP}^m(\delta_n)\to\mathcal{PP}^m(\delta_n)$ defined by:
\[
\tau_{i,j}(\pi)_{k,l} :=
\begin{cases}
\pi_{k,l}, & \text{if } (k,l) \neq (i,j), \\
\min(\pi_{i,j-1}, \pi_{i-1,j}) + \max(\pi_{i+1,j}, \pi_{i,j+1}) - \pi_{i,j}, & \text{if } (k,l) = (i,j),
\end{cases}
\]
with the boundary conditions
\[
\pi_{0,j} := m, \quad \pi_{i,0} := m, \quad \pi_{i,j} := 0 \text{ if $i+j>n+1$}.
\]
For $-n+1 \leq k \leq n-1$, let $F_k$ be the composition of toggles along the $k$th diagonal:
\[
F_k := \prod_{\substack{1 \leq i \leq n \\ 1 \leq j \leq n \\ j - i = k}} \tau_{i,j}.
\]
For $1 \leq k \leq n$, let $R_k$ be the composition of toggles along the $k$th antidiagonal:
\[
R_k := \prod_{\substack{1 \leq i \leq n \\ 1 \leq j \leq n \\ i + j - 1 = k}} \tau_{i,j}.
\]
\Cref{fig:toggle-composites} illustrates these two families of commuting toggle composites.

\begin{figure}
\centering
\begin{tikzpicture}[
  scale=.78,
  celltext/.style={font=\scriptsize},
  title/.style={font=\small},
]
\begin{scope}
\node[title] at (2.5,.85) {diagonal composites $F_k$};
\foreach \i in {1,...,5} {
  \pgfmathtruncatemacro{\last}{6-\i}
  \foreach \j in {1,...,\last} {
    \pgfmathtruncatemacro{\fk}{\j-\i}
    \pgfmathtruncatemacro{\shade}{18+8*(\fk+4)}
    \path[draw,fill=cyan!\shade!white] (\j-1,-\i) rectangle (\j,-\i+1);
    \node[celltext] at (\j-.5,-\i+.5) {$F_{\fk}$};
  }
}
\end{scope}

\begin{scope}[xshift=7.1cm]
\node[title] at (2.5,.85) {antidiagonal composites $R_k$};
\foreach \i in {1,...,5} {
  \pgfmathtruncatemacro{\last}{6-\i}
  \foreach \j in {1,...,\last} {
    \pgfmathtruncatemacro{\rk}{\i+\j-1}
    \pgfmathtruncatemacro{\shade}{18+12*(\rk-1)}
    \path[draw,fill=orange!\shade!white] (\j-1,-\i) rectangle (\j,-\i+1);
    \node[celltext] at (\j-.5,-\i+.5) {$R_{\rk}$};
  }
}
\end{scope}
\end{tikzpicture}
\caption{The toggle composites $F_k$ and $R_k$ on a staircase of size~$5$. Cells with the same label are toggled together in one composite; within each composite no two toggled cells are horizontal or vertical neighbors, so the corresponding local toggles commute.}
\label{fig:toggle-composites}
\end{figure}

We define \dfn{rowmotion} $\mathrm{Row}\colon \mathcal{PP}^m(\delta_n) \to \mathcal{PP}^m(\delta_n)$ as
\begin{equation}\label{eq:rowmotion-definition}
\mathrm{Row} := R_n \cdot R_{n-1} \cdot \dots \cdot R_1
\end{equation}
and \dfn{promotion} $\mathrm{Pr}\colon \mathcal{PP}^m(\delta_n) \to \mathcal{PP}^m(\delta_n)$ as
\begin{equation}\label{eq:promotion-definition}
\mathrm{Pr} := F_{n-1} \cdot F_{n-2} \cdot \dots \cdot F_{-n+2} \cdot F_{-n+1}.
\end{equation}

\begin{thm}[{Striker--Williams~\cite{striker2012promotion}; see also~\cite{hopkins2020cyclic}}] \label{thm:striker_williams}
$\mathrm{Row}\colon \mathcal{PP}^m(\delta_n)\to\mathcal{PP}^m(\delta_n)$ and $\mathrm{Pr}\colon \mathcal{PP}^m(\delta_n)\to\mathcal{PP}^m(\delta_n)$ are conjugate to one another in the group generated by the toggles.
\end{thm}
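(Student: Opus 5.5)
The plan is the standard Striker--Williams argument: identify the toggle group action with a product of toggles indexed by the cells of $\delta_n$, and show that any two orderings of the cells that are ``linear extensions'' of a suitable acyclic orientation of the same cycle-type structure give conjugate products. Concretely, the key fact is that $\tau_{i,j}$ and $\tau_{k,l}$ commute whenever the cells $(i,j)$ and $(k,l)$ are not horizontally or vertically adjacent. This holds because the new value at $(i,j)$ depends only on the values at its four neighbours, so if neither cell neighbours the other, each toggle leaves the inputs of the other unchanged. This gives the commutations already noted in \cref{fig:toggle-composites}, and in particular each $F_k$ and $R_k$ is a well-defined product of commuting involutions.

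Next I will reduce rowmotion to a column-by-column product. Color each cell $(i,j)$ by the parity of $i+j$, let $E$ be the product of toggles on even cells and $O$ the product on odd cells. Since cells on the same antidiagonal have equal parity and no two same-parity cells are adjacent, the commutation relations let me reorder $\mathrm{Row}=R_n\cdots R_1$ freely, except that adjacent cells lie on consecutive antidiagonals. I will then use the standard lemma: if $T = t_N\cdots t_1$ is a product of involutions and one conjugates by $t_N$, one gets $t_N T t_N = t_{N-1}\cdots t_1 t_N$. In other words, cyclically rotating the word is a conjugation in the toggle group. Combined with commutation moves, this means that two words (each using every cell exactly once) are conjugate whenever the induced orientations of the adjacency graph of $\delta_n$ (orient an edge from the cell toggled first to the cell toggled later) differ by a sequence of ``source-to-sink flips''. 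A flip corresponds to moving a cell that currently comes first among its neighbours to the end.

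The final step is to compare the two orientations. For $\mathrm{Row}$, each edge between $(i,j)$ and a neighbour on antidiagonal $i+j$ points from the lower antidiagonal index to the higher one. For $\mathrm{Pr}$, a horizontal edge $(i,j)\!-\!(i,j+1)$ points from diagonal $j-i$ to $j-i+1$, i.e.\ left to right, while a vertical edge $(i,j)\!-\!(i+1,j)$ points from diagonal $j-i$ to $j-i-1$, i.e.\ bottom to top. So the two orientations agree on horizontal edges and are opposite on vertical edges. The adjacency graph is a planar grid-type graph. I expect the main obstacle to be exhibiting an explicit flip sequence, or equivalently invoking the known criterion that two acyclic orientations are flip-equivalent iff they have the same ``circulation'' around every cycle; for planar grids it suffices to check the unit squares. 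Each unit square has two horizontal and two vertical edges. In both orientations the count of clockwise minus counterclockwise edges around the square is $0$, since reversing both vertical edges preserves this balance. So by the Pretzel/Eriksson-type criterion for flip equivalence, the orientations are flip-equivalent and $\mathrm{Row}$ and $\mathrm{Pr}$ are conjugate in the toggle group.

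If I prefer to avoid that criterion, I will give the flips directly, mimicking Striker--Williams. Repeatedly move the leftmost column's cells, which are sources in the $\mathrm{Row}$ orientation restricted appropriately, to the end, and proceed column by column. This converts ``antidiagonal order'' to ``column order'' and then to ``diagonal order''. The check here is a routine induction on the number of columns.
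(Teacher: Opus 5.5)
Your main argument is correct, but it is organized differently from the argument the paper relies on. The paper does not reprove this statement. It cites Striker--Williams (see also Hopkins), whose proof works with the toggle words directly: it uses the rc-structure of the staircase (antidiagonals as rows, diagonals as columns) to carry out a concrete sequence of conjugations by toggles. You instead record a toggle word that uses each cell once by the acyclic orientation it induces on the cell-adjacency graph. You note that words with the same orientation give the same product, and that moving a source to the end is conjugation by its toggle. You then invoke Pretzel's theorem: two acyclic orientations of a connected graph are related by source-to-sink flips if and only if they have the same circulation around every cycle. Both routes rest on the same two facts: non-adjacent toggles commute, and cyclically shifting a product of involutions is a conjugation. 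Your route is more conceptual and applies verbatim to any two toggle orders with matching cycle invariants. The Striker--Williams route is self-contained and hands you the conjugating element.

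\textbf{The direction of vertical edges in $\mathrm{Pr}$.} The vertical edge $(i,j)$--$(i+1,j)$ points from diagonal $j-i-1$ to diagonal $j-i$, not the reverse. That is what ``bottom to top'' means, so your conclusion is unaffected.

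\textbf{The face-by-face check.} It is unnecessary. The $\mathrm{Row}$ orientation points up the function $i+j$ and the $\mathrm{Pr}$ orientation points up $j-i$. Each function changes by exactly $1$ along every edge, so both orientations have circulation $0$ on every cycle. This also makes Pretzel dispensable. Flipping a source $v$ is the move $h(v)\mapsto h(v)+2$ at a local minimum of such a height function $h$. Start from $h=i+j$ and repeatedly flip a cell of minimal $h$ among those with $h<j-i+2n$; such a cell is always a local minimum. This process reaches $j-i+2n$, which induces the $\mathrm{Pr}$ orientation.

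\textbf{Your fallback scheme fails as described.} Moving the cells of the leftmost column to the end flips each of them exactly once. This reverses the edges between columns $1$ and $2$, but it restores every vertical edge inside column $1$. So it only rotates the column order cyclically, and doing it for every column returns you to $\mathrm{Row}$. In fact, toggling columns from left to right with each column top to bottom is literally the element $\mathrm{Row}$. Doing the same with each column bottom to top is literally $\mathrm{Pr}$. The whole task is therefore to reverse the vertical edges, and that requires moving rows, not columns. A sequence that works: for $k=1,\dots,n-1$, flip all cells of rows $k,k-1,\dots,1$, in that order, traversing each row from left to right. One checks that each cell is a source when its turn comes, and that row $i$ is moved exactly $n-i$ times. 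The product of the flipped toggles, with the last flip leftmost, is then an explicit conjugator.
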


\subsection{Fans of Dyck paths}

We now describe a different way of viewing promotion of staircase plane partitions.

A \dfn{Dyck path} of semilength $n$ (and length $2n$) is a lattice walk from $(0,0)$ to $(n,n)$ using only unit steps in north and east direction that stays above the $x=y$ line. We represent a Dyck path of semilength $n$ as a sequence $(0=d_0,d_1,\dots,d_{2n}=0)$ of heights, where $d_i$ is the distance between the path and the $x=y$ line in $x$-direction after the $i$th step. We say that a Dyck path $D = (d_0,d_1, \dots, d_{2n})$ is nested by a Dyck path
$D' = (d'_0,d'_1, \dots, d'_{2n})$ if for all $0\le i\le 2n$ we have $d_i \le d_i'$ and we write~$D\le D'$.

An \dfn{$m$-fan of Dyck paths} is an $m$-tuple $(D_m,D_{m-1},\dots,D_1)$ of Dyck paths of the same semilength such that $D_1 \le D_2 \le \dots \le D_m$.
We denote with $\mathcal{D}^m_n$ the set of $m$-fans of Dyck paths of semilength $n$. We represent an $m$-fan $\mathcal{F}$ of Dyck paths with a sequence of vectors of heights $(\emptyset=\mu^0, \mu^1, \dots, \mu^{2n}=\emptyset)$, where for each $\mu^i = (\mu^i_m,\mu^i_{m-1},\dots, \mu_1^i) \in \mathbb{Z}_{\ge 0}^m$ the entries are weakly decreasing and the sequence $(\mu^0_k,\mu^1_k,\dots,\mu^{2n}_k)$ gives the Dyck path with index $k$ in $\mathcal{F}$.
For example, the fan of Dyck paths on the left in \cref{fig:bijection PP-fans} is given by
\[
(0000,\! 1111,\! 2222, 3331, 4442, 5331, 6220, 5311, 6420, 5311, 4200, 3111, 2220,\! 1111,\! 0000)
\]

\begin{figure}
\begin{center}
\begin{tikzpicture}
\node at (0,0) {\begin{tikzpicture}[scale=0.8, every node/.style={font=\small}]
\def\pp{
{4,3,3,3,3,1},
{3,3,2,2,2},
{3,3,2,1},
{1,1,1},
{1,0},
{0}
}
\def\n{7}
\def\offset{0.05}
\foreach \row in {1,...,\n} {
	\draw (0,\row) rectangle (\row,\row);
}
\foreach \col in {1,...,\n} {
	\draw (\col-1,\n) rectangle (\col-1,\col-1);
}
\foreach \row [count=\r from 0] in \pp {
    \foreach \val [count=\c from 0] in \row {
        \node at (\c+0.5,\n-\r-0.5) {\val};
    }
}
\draw[thick,dashed] (0,0) -- (\n,\n);
\draw[red,thick]
(0,0) -- (0,2) -- (1,2) -- (1,3) -- (3,3) -- (3,4) -- (4,4) -- (4,5) -- (5,5) -- (5,6) -- (6,6) -- (6,7) -- (7,7);
\draw[blue,thick]
({0-\offset},{0+\offset}) --
({0-\offset},{4+\offset}) --
({3-\offset},{4+\offset}) --
({3-\offset},{5+\offset}) --
({5-\offset},{5+\offset}) --
({5-\offset},{7+\offset}) --
({7-\offset},{7+\offset});
\draw[green,thick]
({0-2*\offset},{0+2*\offset}) --
({0-2*\offset},{4+2*\offset}) --
({2-2*\offset},{4+2*\offset}) --
({2-2*\offset},{6+2*\offset}) --
({5-2*\offset},{6+2*\offset}) --
({5-2*\offset},{7+2*\offset}) --
({7-2*\offset},{7+2*\offset});
\draw[purple,thick]
({0-3*\offset},{0+3*\offset}) --
({0-3*\offset},{6+3*\offset}) --
({1-3*\offset},{6+3*\offset}) --
({1-3*\offset},{7+3*\offset}) --
({7-3*\offset},{7+3*\offset});
\end{tikzpicture}};    
\node at (3.5,0) {\Large $\xrightarrow{\mathrm{Pr}}$};
\node at (7.5,0) {\begin{tikzpicture}[scale=0.8, every node/.style={font=\small}]
\def\pp{
{4,4,4,4,2,1},
{4,3,3,3,1},
{3,3,1,0},
{3,3,1},
{1,1},
{1}
}
\def\n{7}
\def\offset{0.05}
\foreach \row in {1,...,\n} {
	\draw (0,\row) rectangle (\row,\row);
}
\foreach \col in {1,...,\n} {
	\draw (\col-1,\n) rectangle (\col-1,\col-1);
}
\foreach \row [count=\r from 0] in \pp {
    \foreach \val [count=\c from 0] in \row {
        \node at (\c+0.5,\n-\r-0.5) {\val};
    }
}
\draw[thick,dashed] (0,0) -- (\n,\n);
\draw[red,thick]
(0,0) -- (0,1) -- (1,1) -- (1,2) -- (2,2) -- (2,3) -- (3,3) -- (3,5) -- (4,5) -- (5,5) -- (5,6) -- (6,6) -- (6,7) -- (7,7);
\draw[blue,thick]
({0-\offset},{0+\offset}) --
({0-\offset},{3+\offset}) --
({2-\offset},{3+\offset}) --
({2-\offset},{5+\offset}) --
({4-\offset},{5+\offset}) --
({4-\offset},{6+\offset}) --
({5-\offset},{6+\offset}) --
({5-\offset},{7+\offset}) --
({7-\offset},{7+\offset});
\draw[green,thick]
({0-2*\offset},{0+2*\offset}) --
({0-2*\offset},{3+2*\offset}) --
({2-2*\offset},{3+2*\offset}) --
({2-2*\offset},{5+2*\offset}) --
({4-2*\offset},{5+2*\offset}) --
({4-2*\offset},{7+2*\offset}) --
({7-2*\offset},{7+2*\offset});
\draw[purple,thick]
({0-3*\offset},{0+3*\offset}) --
({0-3*\offset},{5+3*\offset}) --
({1-3*\offset},{5+3*\offset}) --
({1-3*\offset},{6+3*\offset}) --
({4-3*\offset},{6+3*\offset}) --
({4-3*\offset},{7+3*\offset}) --
({7-3*\offset},{7+3*\offset});
\end{tikzpicture}};
\end{tikzpicture}
\end{center}
\caption{An example of the bijection $\Phi$ between staircase plane partitions and fans of Dyck paths. We also depict how promotion behaves on these objects.}
\label{fig:bijection PP-fans}
\end{figure}

Staircase plane partitions of size $n-1$ and height $m$ are naturally in bijection with $m$-fans of Dyck paths of semilength $n$, as we now explain. A Dyck path $D$ of length $2n$ is uniquely determined by the set $\Lambda(D)$ of unit squares that lie between the path and lines $x=0$ and $y=n$. For a plane partition $\pi\in \mathcal{PP}^m(\delta_{n-1})$ let~$\pi^{\ge i}$ be the set of all boxes whose entry is at least $i$. Then for all $1\le i\le m$ there is a unique Dyck path $D_i$ with $\Lambda(D_i) = \pi^{\ge i}$ and $(D_m,D_{m-1},\dots,D_1) \in \mathcal{D}^m_n$. See \cref{fig:bijection PP-fans} for an example. We denote this map by $\Phi\colon \mathcal{PP}^m(\delta_{n-1}) \to \mathcal{D}^m_n$.

We now define promotion on fans of Dyck paths. For a vector $\rho \in \mathbb{Z}^m$ its \dfn{dominant representative} $\mathrm{dom}(\rho)$ is the vector obtained from $\rho$ by sorting the absolute values of its entries in weakly decreasing order. Let $\mathcal{F} = (\emptyset=\mu^0,\mu^1,\mu^2, \ldots, \mu^{2n}=\emptyset)$ be an $m$-fan of Dyck paths of semilength $n$. Then for each $i=1,\ldots,2n-1$, we define $\mathrm{BK}_i(\mathcal{F}) := (\emptyset=\mu^0, \dots, \mu^{i-1},\lambda^{i},\mu^{i+1}, \dots, \mu^{2n}=\emptyset)$, where
    \[
    \lambda^{i} := \mathrm{dom}(\mu^{i-1}+\mu^{i+1}-\mu^{i}).
    \]
We call these maps $\mathrm{BK}_i$ because they are similar to \dfn{Bender--Knuth involutions} acting on tableaux. Indeed, note that each $\mathrm{BK}_i\colon \mathcal{D}^m_n \to \mathcal{D}^m_n$ is a well-defined involution. Finally, we define the \dfn{promotion} $\mathrm{Pr}\colon\mathcal{D}^m_n\to\mathcal{D}^m_n$ of $m$-fans of Dyck paths as the composition
    \[
    \mathrm{Pr} := \mathrm{BK}_{2n-1}\cdot \mathrm{BK}_{2n-2}\cdot \ldots \cdot \mathrm{BK}_{1}.
    \]
(Note however that $\mathrm{BK}_1$ and $\mathrm{BK}_{2n-1}$ always act as the identity, so this is the same as defining $\mathrm{Pr} := \mathrm{BK}_{2n-2}\cdot \mathrm{BK}_{2n-3}\cdot \ldots \cdot \mathrm{BK}_{2}$.)

These maps are well-defined involutions on $\mathcal{D}^m_n$.  For a single path the rule is transparent: at position $i$, a valley becomes a peak, a peak of height greater than~$1$ becomes a valley, a peak of height $1$ is fixed, and a double rise or double fall is fixed.  For a fan, the same rule is applied to the multiset of local segments, followed by sorting to restore the nesting order.

\begin{figure}
\centering
\begin{tikzpicture}[scale=0.82, line cap=round, line join=round,
  every node/.style={font=\scriptsize}]
  \newcommand{\panel}[4]{
    \begin{scope}[xshift=#1]
      \draw[gray!45] (-.15,0) -- (2.15,0);
      \draw[gray!45] (-.15,1) -- (2.15,1);
      \draw[gray!45] (-.15,2) -- (2.15,2);
      \draw[gray!45] (0,-.15) -- (0,2.15);
      \draw[gray!45] (1,-.15) -- (1,2.15);
      \draw[gray!45] (2,-.15) -- (2,2.15);
      #3
      \node[align=center] at (1,-.75) {#2};
      \node[align=center,font=\scriptsize] at (1,2.35) {#4};
    \end{scope}}

\newcommand{\panelsmall}[4]{
    \begin{scope}[xshift=#1]
      \draw[gray!45] (-.15,0) -- (1.15,0);
      \draw[gray!45] (-.15,1) -- (1.15,1);
      \draw[gray!45] (0,-.15) -- (0,1.15);
      \draw[gray!45] (1,-.15) -- (1,1.15);
      #3
      \node[align=center] at (0.5,-.75) {#2};
      \node[align=center,font=\scriptsize] at (0.5,1.35) {#4};
    \end{scope}}

  \panelsmall{0cm}{peak $h>1$\\becomes valley}{
    \draw[blue!70!black,thick] (0,0) -- (0,1) -- (1,1);
    \draw[orange!80!black,thick,dashed] (0,0) -- (1,0) -- (1,1);
  }{}
  \panelsmall{2.9cm}{peak $h=1$\\is fixed}{
    \draw[blue!70!black,thick] (0,0) -- (0,1) -- (1,1);
    \draw[orange!80!black,thick,dashed] (0,0) -- (0,1) -- (1,1);
    \draw[thick,dashed] (0,0) -- (1,1);
  }{}
  \panelsmall{5.8cm}{valley\\becomes peak}{
    \draw[blue!70!black,thick] (0,0) -- (1,0) -- (1,1);
    \draw[orange!80!black,thick,dashed] (0,0) -- (0,1) -- (1,1);
  }{}
  \panel{8.7cm}{double rise\\is fixed}{
    \draw[blue!70!black,thick] (1,0) -- (1,1) -- (1,2);
    \draw[orange!80!black,thick,dashed] (1,0) -- (1,1) -- (1,2);
  }{}
  \panel{11.6cm}{double fall\\is fixed}{
    \draw[blue!70!black,thick] (0,1) -- (1,1) -- (2,1);
    \draw[orange!80!black,thick,dashed] (0,1) -- (1,1) -- (2,1);
  }{}
\end{tikzpicture}
\caption{The local effect of $\BK_i$ on one path.  Solid blue shows the local shape before applying $\BK_i$, and dashed orange shows the local shape after applying it.  In a fan, the resulting segments are sorted by height.}
\label{fig:bk-local}
\end{figure}

One way to compute promotion of a fan $\mathcal{F}$ is in terms of the following diagram:
\begin{equation}\label{eq:promotion-diag}
  \begin{tikzcd}
    \mu^{1} \rar
    & \mu^{2} \rar
    & \cdots \rar
    & \emptyset \\
    \emptyset \rar \uar
        \ar[->,blue,rounded corners,to path={
           ([yshift=0.0em,xshift=-1.8em]\tikzcdmatrixname-2-1.center)
        -- ([yshift=2.0em,xshift=-1.8em]\tikzcdmatrixname-1-1.center) node[above,xshift=8em]{$\mathcal{F}$}
        -- ([yshift=2.0em,xshift=0.0em]\tikzcdmatrixname-1-4.center)
        }]{}
        \ar[->,orange,rounded corners,to path={
           ([yshift=-1.1em,xshift=-1.8em]\tikzcdmatrixname-2-1.center)
        -- ([yshift=-1.1em,xshift=-1.8em]\tikzcdmatrixname-2-2.center)
        -- ([yshift=1.5em,xshift=-1.8em]\tikzcdmatrixname-1-2.center) node[below right,xshift=2em]{$\mathrm{BK}_1(\mathcal{F})$}
        -- ([yshift=1.5em,xshift=0.0em]\tikzcdmatrixname-1-4.center)
        }]{}
        \ar[->,red,rounded corners,to path={
           ([yshift=-2.0em,xshift=-1.8em]\tikzcdmatrixname-2-1.center) node[below right, xshift=1em]{$\mathrm{BK}_{2n-1} \circ \cdots \circ \mathrm{BK}_1(\mathcal{F})$}
        -- ([yshift=-2.0em,xshift=-1.8em]\tikzcdmatrixname-2-4.center)
        -- ([yshift=1.0em,xshift=-1.8em]\tikzcdmatrixname-1-4.center)
        -- ([yshift=1.0em,xshift=0.0em]\tikzcdmatrixname-1-4.center)
        }]{}
    & \lambda^{1} \rar \uar
    & \cdots \rar \uar
    & \lambda^{2n-1} \uar
\end{tikzcd}
\end{equation}
We write $\mathcal{F}=(\emptyset=\mu^0, \mu^1,\dots, \mu^{2n}=\emptyset)$ on the corners on the left and top border of the diagram and we apply the local rule $\lambda = \mathrm{dom}(\kappa+\nu -\mu)$ to any square~\begin{tikzcd}
  \mu \rar
    & \nu \\
  \kappa \uar \rar
    & \lambda \uar
  \end{tikzcd}.
The promotion $\Pr(\mathcal{F}) = (\emptyset=\lambda^0, \lambda^1,\dots, \lambda^{2n}=\emptyset)$ can be read off at the bottom and right border of the diagram.

A key observation about the map $\Phi\colon \mathcal{PP}^m(\delta_{n-1}) \to \mathcal{D}^m_n$ is that it intertwines toggles and Bender--Knuth involutions and is therefore promotion-equivariant. More formally:

\begin{lemma} \label{lem:pp_dyck_bij}
    For $\pi\in \mathcal{PP}^m(\delta_{n-1})$ we have
\begin{equation}
    \label{eq:toggle-bk-intertwine}
    \Phi(F_{i-n}(\pi)) = \mathrm{BK}_{i}(\Phi(\pi))
\end{equation}
     for all $2\le i\le 2n-2$, and hence
     \[
     \Phi(\mathrm{Pr}(\pi)) = \mathrm{Pr}(\Phi(\pi)).\]
\end{lemma}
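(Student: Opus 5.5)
The plan is to translate the heights $\mu^i$ of the fan $\Phi(\pi)$ into data along a single diagonal of $\pi$. Then \eqref{eq:toggle-bk-intertwine} becomes a statement about one diagonal and its two neighbouring diagonals, which is checked level by level.

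\textbf{Step 1: heights as diagonal counts.} Index the steps so that step $i$ of a Dyck path of semilength $n$ corresponds to the diagonal $j-i'=i-n$ of $\delta_{n-1}$. For $2\le i\le 2n-2$ this diagonal is nonempty and has $\ell_i$ cells. For a single Dyck path $D$, I would show that
\[
d_i = \varepsilon_i + 2c_i(D),
\]
where $\varepsilon_i\in\{0,1\}$ depends only on $i$ and $n$ (it is a parity correction), and $c_i(D)$ is the number of cells of $\Lambda(D)$ on that diagonal. Here I use that $\Lambda(D)$ is a top-left justified region, so its cells on a diagonal form an initial segment from the upper-left end. This should follow by a short induction on $i$: each step moves the path across one diagonal and changes $c$ by at most one. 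Applied to the fan, this gives $\mu^i_k=\varepsilon_i+2c_i^{(k)}$, where $c_i^{(k)}=\#\{\text{cells on diagonal } i-n \text{ with entry}\ge k\}$. So the vector $(c_i^{(m)},\dots,c_i^{(1)})$ is exactly the conjugate partition of the weakly decreasing sequence of entries $x_1\ge x_2\ge\dots\ge x_{\ell_i}$ along that diagonal, truncated at height $m$.

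\textbf{Step 2: the toggle in Gelfand--Tsetlin form.} The neighbours of the cell $x_j$ on diagonal $i-n$ lie on diagonals $i-n\pm1$. Call the entries of those diagonals $a_\bullet$ and $b_\bullet$. By the plane partition inequalities, $a,x$ interlace and $b,x$ interlace. With suitable indexing, the toggle on the diagonal is
\[
x_j\mapsto \min(a_j,b_j)+\max(a_{j+1},b_{j+1})-x_j,
\]
with the boundary conventions ($m$ above and to the left, $0$ beyond the staircase) supplying the missing $a$'s and $b$'s. This is the piecewise-linear Bender--Knuth move on a Gelfand--Tsetlin pattern.

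\textbf{Step 3: the key identity.} For each level $k$, I would show that the number of $j$ with $x'_j\ge k$ equals $c^{(k)}_{i-1}+c^{(k)}_{i+1}-c^{(k)}_i$, provided this quantity lies in the admissible range. When it does not, the value is reflected; this reflection is where the absolute values and sorting in $\mathrm{dom}$ enter.

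The argument is a direct count. Interlacing lets me compare $\#\{j: \min(a_j,b_j)\ge k\}$ and $\#\{j:\max(a_{j+1},b_{j+1})\ge k\}$ with the counts $c^{(k)}_{i\pm1}$. The cell $x_j$ then satisfies $x'_j\ge k$ exactly when the interval $[\max(a_{j+1},b_{j+1}),\min(a_j,b_j)]$ reflects $x_j$ across the threshold $k$.

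Combining this with the parity bookkeeping $\varepsilon_{i-1}+\varepsilon_{i+1}-\varepsilon_i=\varepsilon_i$ recovers $\lambda^i=\mathrm{dom}(\mu^{i-1}+\mu^{i+1}-\mu^i)$. The sorting in $\mathrm{dom}$ is harmless: the left-hand side is automatically the conjugate of a weakly decreasing sequence, hence already sorted. The only genuine non-sortedness is the sign flip, which occurs when $c_{i-1}+c_{i+1}-c_i$ would make a height equal to $-1$; that is, when a peak of height $1$ should stay fixed. I would check this boundary case separately at the end of the diagonal next to the boundary $x=y$.

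\textbf{Step 4: promotion.} Given \eqref{eq:toggle-bk-intertwine}, promotion equivariance follows by composing. As $i$ runs from $2$ to $2n-2$, the index $i-n$ runs over the diagonals $-n+2,\dots,n-2$ of $\delta_{n-1}$ in the same order as in \eqref{eq:promotion-definition}. Moreover, $\mathrm{BK}_1$ and $\mathrm{BK}_{2n-1}$ act trivially.

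I expect Step 3 to be the main obstacle, specifically the boundary and $\mathrm{dom}$ bookkeeping at the ends of each diagonal. I would first verify the identity by brute force on $\delta_2$ and $\delta_3$ with $m=2$ to pin down the indexing and $\varepsilon_i$, and only then write the general interlacing count.
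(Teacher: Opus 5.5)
\textbf{Gap in Step 3.} Step 3 fails. This is a genuine gap, not a bookkeeping issue: there is no level-by-level identity, and the sorting in $\mathrm{dom}$ does real work even when no entry goes negative.

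Take $n=3$, $m=2$, $\pi_{1,1}=2$, $\pi_{2,1}=1$, $\pi_{1,2}=0$. Then $\Phi(\pi)$ has $\mu^1=(1,1)$, $\mu^2=(2,0)$, $\mu^3=(1,1)$. The toggle at $(2,1)$ gives $\min(2,2)+\max(0,0)-1=1$, so $F_{-1}$ fixes $\pi$. Yet $\mu^1+\mu^3-\mu^2=(0,2)$, which becomes $\mu^2$ only after sorting.

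In your notation with $i=2$, the expression $c^{(k)}_{i-1}+c^{(k)}_{i+1}-c^{(k)}_i$ equals $1$ at level $2$ and $0$ at level $1$. The true counts are $0$ and $1$. Both values lie in the admissible range, so your reflection does not apply, and no constant shift repairs the mismatch. So the claim that the only non-sortedness comes from the sign flip is false. The brute-force check on $\delta_2$ you planned would already reveal this.

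\textbf{Why it fails.} The reason is local. At a cell with entry $x$, put $a=\min(N,W)-x$ and $b=x-\max(S,E)$. The paths with a peak there are exactly those at levels $x<t\le x+a$. The paths with a valley there are those at levels $x-b<t\le x$. After the toggle, $x'=x+a-b$, there are $b$ peaks at levels $x'<t\le x'+b$ and $a$ valleys at levels $x'-a<t\le x'$.

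So the toggle exchanges the multiplicities $a$ and $b$ but changes which paths carry them. The componentwise expression $\mu^{i-1}+\mu^{i+1}-\mu^i$ instead flips peak and valley on each path separately. The two agree only as multisets of local segments, i.e.\ after sorting, together with the absolute value for height-one peaks at cells off the staircase. Proving this multiset statement cell by cell is the real content of the lemma, and it is how the paper argues. A global interlacing count along the diagonal does not replace it.

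\textbf{Step 1 also needs correcting.} A cell with entry $\ge k$ lies in $\Lambda(D_k)$, i.e.\ above $D_k$. So more such cells make $D_k$ \emph{lower}, and the correct formula is $\mu^i_k=\min(i,2n-i)-2c^{(k)}_i$. For instance, when $\pi=0$ every $D_k$ is the highest path, whereas your formula gives heights $\varepsilon_i\in\{0,1\}$. Equivalently, $\mu^i_k=\varepsilon_i+2\bar c^{(k)}_i$, where $\bar c^{(k)}_i$ counts cells on the diagonal with entry $<k$.

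Your parity identity $\varepsilon_{i-1}+\varepsilon_{i+1}-\varepsilon_i=\varepsilon_i$ is also false. Since $\varepsilon_i\equiv i \pmod 2$, the left side is $2$ or $-1$. With $h_i=\min(i,2n-i)$ one has $h_{i-1}+h_{i+1}-h_i=h_i$ for $i\neq n$, but the value is $h_n-2$ at $i=n$. This extra shift comes from the boundary value $m$ above and to the left of cell $(1,1)$.

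Steps 2 and 4 are fine.
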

\begin{proof}
Fix $i$ with $2\leq i\leq 2n-2$. The cells $(p,q)$ on the diagonal
\[
q-p=i-n
\]
are exactly those that can affect the height vector $\mu^i$ of the corresponding fan. Toggling these cells leaves every $\mu^r$ with $r\neq i$ unchanged. Moreover, no two cells on this diagonal are related by a cover, so the corresponding toggles commute. It is therefore enough to examine the effect of toggling a single cell.

Suppose that the entry in this cell is $x$. We first consider the configuration shown in \cref{fig:toggle-local}. Write the entries in the north, west, south, and east neighboring cells as
\[
N=x+a+d,\qquad W=x+a,\qquad S=x-b,\qquad E=x-b-c,
\]
where $a,b,c,d\geq0$. In particular,
\[
\min(N,W)=x+a,
\qquad
\max(S,E)=x-b.
\]

Recall that, for each $t$, one path in $\Phi(\pi)$ is the boundary path associated with the cells of $\pi$ whose entries are at least $t$. The local segments of these paths that meet the cell containing $x$ fall into four families.

For the $a$ values of $t$ satisfying $ x<t\leq x+a, $
the north and west neighbors have entries at least $t$, whereas the cell containing $x$ does not. The corresponding path follows the left side and then the upper side of the cell. Hence there are $a$ local $NE$ peaks.

Similarly, for the $b$ values of $t$ satisfying $x-b<t\leq x,$
the corresponding path follows the lower side and then the right side of the cell. Hence there are $b$ local $EN$ valleys.

The differences between the two upper neighbors and between the two lower neighbors account for the remaining local segments. For the $d$ values of $t$ satisfying $x+a<t\leq x+a+d,$
the path has a straight segment along the upper side of the cell.
Similarly, for the $c$ values of $t$ satisfying $x-b-c<t\leq x-b,$
the path has a straight segment along the right side of the cell.
These segments are determined entirely by the neighboring entries
and are therefore unaffected by the toggle. All other paths do not touch the cell.

The toggle replaces $x$ by
\[
x'=\min(N,W)+\max(S,E)-x=x+a-b.
\]
Expressing the neighboring entries in terms of $x'$ gives
\[
N=x'+b+d,\qquad W=x'+b,\qquad
S=x'-a,\qquad E=x'-a-c.
\]
The same count therefore shows that, after the toggle, there are $b$
local $NE$ peaks and $a$ local $EN$ valleys. The $c$ and $d$ straight
segments remain unchanged. Thus the toggle exchanges the multiplicities of the peak and valley segments and fixes every other local segment.

The figure shows only one of the four possible placements of the $c$- and $d$-segments.
If $W>N$, the $d$-segments lie along the left side rather than the upper side; if $E>S$, the $c$-segments lie along the lower side rather than the right side. In every case, the same calculation exchanges the numbers of peaks and valleys and leaves the straight segments fixed.

This is precisely the one-path local rule for $\BK_i$. The only exception occurs for an $NE$ peak of height $1$: replacing it by a valley would take the path below the diagonal, and the absolute value in the definition of $\dom$ reflects it back, so that this segment is fixed. For a fan, the resulting local segments are sorted by height, which is exactly the sorting in
\[
\mu^i\longmapsto
\dom\bigl(\mu^{i-1}+\mu^{i+1}-\mu^i\bigr).
\]
Toggling all cells on the diagonal therefore proves \eqref{eq:toggle-bk-intertwine}. Composing these identities for $2\leq i\leq 2n-2$, and including the trivial factors $\BK_1$ and $\BK_{2n-1}$, proves the promotion statement.
\end{proof}

\begin{figure}[t]
\centering
\begin{tikzpicture}[
  x=1.6cm,
  y=1.6cm,
  line cap=round,
  line join=round,
  entry/.style={
    font=\footnotesize,
    fill=white,
    inner sep=1.4pt
  },
  segmentlabel/.style={
    font=\scriptsize,
    fill=white,
    inner sep=.6pt
  },
  peak/.style={
    draw=orange!85!black,
    line width=1.6pt,
    postaction={decorate},
    decoration={
      markings,
      mark=at position .72 with
        {\arrow{Stealth[length=1.8mm]}}
    }
  },
  valley/.style={
    draw=blue!70!black,
    line width=1.6pt,
    postaction={decorate},
    decoration={
      markings,
      mark=at position .72 with
        {\arrow{Stealth[length=1.8mm]}}
    }
  },
  fixed/.style={
    draw=gray!75!black,
    dashed,
    line width=1.15pt,
    postaction={decorate},
    decoration={
      markings,
      mark=at position .62 with
        {\arrow{Stealth[length=1.6mm]}}
    }
  },
  grid/.style={
    draw=gray!55,
    line width=.55pt
  }
]
  \newcommand{\crossgrid}{
    \draw[grid] (0,0) rectangle (1,1);
    \draw[grid] (0,1) rectangle (1,2);
    \draw[grid] (-1,0) rectangle (0,1);
    \draw[grid] (1,0) rectangle (2,1);
    \draw[grid] (0,-1) rectangle (1,0);
  }

  \begin{scope}
    \crossgrid

    \node[entry] at (.5,.5) {$x$};
    \node[entry] at (.5,1.5) {$x+a+d$};
    \node[entry] at (-.5,.5) {$x+a$};
    \node[entry] at (.5,-.5) {$x-b$};
    \node[entry] at (1.5,.5) {$x-b-c$};

    \draw[peak]
      (0,0) -- (0,1) -- (1,1);
    \node[segmentlabel,text=orange!85!black]
      at (.24,.78) {$a$};

    \draw[valley]
      (0,0) -- (1,0) -- (1,1);
    \node[segmentlabel,text=blue!70!black]
      at (.76,.22) {$b$};

    \draw[fixed]
      (-.06,1.06) -- (1.06,1.06);
    \node[
      segmentlabel,
      text=gray!75!black,
      anchor=south west
    ] at (-.06,1.07) {$d$};

    \draw[fixed]
      (1.06,-.06) -- (1.06,1.06);
    \node[
      segmentlabel,
      text=gray!75!black,
      anchor=north west
    ] at (1.07,-.06) {$c$};
  \end{scope}

  \node[align=center,font=\scriptsize] at (3.08,.55)
    {$x'=x+a-b$\\[-1pt]
     \Large $\xrightarrow{\ \tau\ }$};

  \begin{scope}[xshift=8.3cm]
    \crossgrid

    \node[entry] at (.5,.5) {$x'$};
    \node[entry] at (.5,1.5) {$x'+b+d$};
    \node[entry] at (-.5,.5) {$x'+b$};
    \node[entry] at (.5,-.5) {$x'-a$};
    \node[entry] at (1.5,.5) {$x'-a-c$};

    \draw[peak]
      (0,0) -- (0,1) -- (1,1);
    \node[segmentlabel,text=orange!85!black]
      at (.24,.78) {$b$};

    \draw[valley]
      (0,0) -- (1,0) -- (1,1);
    \node[segmentlabel,text=blue!70!black]
      at (.76,.22) {$a$};

    \draw[fixed]
      (-.06,1.06) -- (1.06,1.06);
    \node[
      segmentlabel,
      text=gray!75!black,
      anchor=south west
    ] at (-.06,1.07) {$d$};

    \draw[fixed]
      (1.06,-.06) -- (1.06,1.06);
    \node[
      segmentlabel,
      text=gray!75!black,
      anchor=north west
    ] at (1.07,-.06) {$c$};
  \end{scope}
\end{tikzpicture}

\caption{The local path segments in the case $N=x+a+d$, $W=x+a$, $S=x-b$, and $E=x-b-c$. Orange denotes the peak segments, blue denotes the valley segments, and dashed gray denotes the straight segments fixed by the toggle. The multiplicities $a$ and $b$ are exchanged, while $c$ and $d$ remain unchanged.}
\label{fig:toggle-local}
\end{figure}

\section{Crystal perspective} \label{sec:crystals}

\dfn{Crystal bases}, introduced by Kashiwara and Lusztig, provide a combinatorial skeleton of representations of quantum groups. For background on crystals, consult~\cite{bump2017crystal}; we only briefly review the theory here. A crystal is a set $B$ equipped with:
\[
e_i, f_i : B \to B \cup \{\emptyset\}, \quad \varepsilon_i, \varphi_i : B \to \mathbb{Z}, \quad \mathrm{wt}: B \to P,
\]
where $P$ is the weight lattice. The operators $e_i$ and $f_i$ are \dfn{raising and lowering operators}, and~$\mathrm{wt}$ gives the weight of an element.
Given two crystals $B$ and $C$, there is a combinatorial construction for the tensor product $B\otimes C$ which corresponds to the crystal of the tensor product of the corresponding representations. The elements in $B\otimes C$ are of the form~$b\otimes c$ where $b\in B$ and $c\in C$. The data of a crystal can be represented by an edge-colored directed graph, called \dfn{crystal graph}, where we have an edge colored $i$ from $a$ to~$b$ if $f_i(a) = b$.
The connected components of a crystal graph correspond to irreducible representations and the number of vertices in a connected component is its dimension.

Each connected component has a unique source, which we call the \dfn{highest weight element} of its component.
A special role are isolated vertices in a crystal graph, which are the \dfn{highest weight elements of weight zero}.

Abstractly, \dfn{promotion} can be defined as an action on highest weight elements of weight zero in a tensor product of crystals $B^{\otimes n}$:
\begin{equation}\label{eq:crystal commutor}
    \Pr(u) := \sigma_{B,B^{\otimes (n-1)}}(u),
\end{equation}
where $\sigma$ is the \dfn{crystal commutor} introduced by Henriques and Kamnitzer~\cite{henriques2006crystals} and built from Lusztig's involution.

\subsection{Spin invariants and fans of Dyck paths}

For type $B_r$, the \dfn{spin crystal} $B_r^{\mathrm{spin}}$ consists of $r$-tuples $(\pm,\dots,\pm)$ with crystal operators acting by flipping adjacent signs. See for example \Cref{fig:spin crystal B3}.

\begin{figure}
\begin{center}
\newcommand{\threecell}[3]{
  \begin{tikzpicture}[scale=0.24]
    \draw[line width=0.7pt] (0,0) rectangle (1,3);
    \draw[line width=0.7pt] (0,1) -- (1,1);
    \draw[line width=0.7pt] (0,2) -- (1,2);
    \node at (0.5,2.5) {\tiny $#1$};
    \node at (0.5,1.5) {\tiny $#2$};
    \node at (0.5,0.5) {\tiny $#3$};
  \end{tikzpicture}
}
\begin{tikzpicture}[scale=0.25]
\node (A) at (0,0) {\threecell{+}{+}{+}};
\node (B) at (7,0) {\threecell{-}{+}{+}};
\node (C) at (14,0) {\threecell{+}{-}{+}};
\node (D1) at (21,1.6) {\threecell{-}{-}{+}};
\node (D2) at (21,-1.6) {\threecell{+}{+}{-}};
\node (E) at (28,0) {\threecell{-}{+}{-}};
\node (F) at (35,0) {\threecell{+}{-}{-}};
\node (G) at (42,0) {\threecell{-}{-}{-}};
\draw[->, draw=green!60!black,ultra thick]  (A) -- node[above] {\small 3} (B);
\draw[->, draw=red!60!black,ultra thick]  (B) -- node[above] {\small 2} (C);
\draw[->, draw=green!60!black,ultra thick]  (C) -- node[above] {\small 3} (D1);
\draw[->, draw=blue!60!black,ultra thick]  (C) -- node[below] {\small 1} (D2);
\draw[->, draw=blue!60!black,ultra thick]  (D1) -- node[above] {\small 1} (E);
\draw[->, draw=green!60!black,ultra thick]  (D2) -- node[below] {\small 3} (E);
\draw[->, draw=red!60!black,ultra thick]  (E) -- node[above] {\small 2} (F);
\draw[->, draw=green!60!black,ultra thick]  (F) -- node[above] {\small 3} (G);
\end{tikzpicture}
\end{center}
\caption{The spin crystal $B^{\mathrm{spin}}_3$ for type $B_3$.}
\label{fig:spin crystal B3}
\end{figure}

For an $r$-fan of Dyck paths $\mathcal{F}=(\emptyset=\mu^0,\mu^1,\dots,\mu^{2n}=\emptyset)$, each consecutive difference $\mu^i-\mu^{i-1}$ is a vector of the form $(\pm 1,\pm 1,\dots,\pm 1)$, which we naturally identify with an element of $B_r^{\mathrm{spin}}$. But note that the Dyck path fan height vectors are written as $\mu^i=(\mu_r^i,\mu_{r-1}^i,\dots,\mu_1^i)$, while the sign columns in the spin-crystal figures, like \cref{fig:spin crystal B3}, are read from top to bottom in the reverse order as $(\epsilon_1,\epsilon_2,\dots,\epsilon_r)$. With this identification, we view an $r$-fan of semilength $n$ as an element of $(B_r^{\mathrm{spin}})^{\otimes 2n}$. As observed by Pappe et al.~\cite{pappe2024promotion}, the $r$-fans of Dyck paths precisely correspond to the highest weight elements of weight zero in this crystal, and equation~\eqref{eq:promotion-diag} is exactly Lenart's realization of the crystal commutor in terms of local rules. So:

\begin{lemma}[{\cite{pappe2024promotion}}]\label{lem:promotion_crystal}
Highest weight elements of weight zero in the crystal $(B_r^{\mathrm{spin}})^{\otimes 2n}$ can be naturally identified with $r$-fans of Dyck paths of semilength $n$. Under this identification, promotion in terms of the crystal commutor in \eqref{eq:crystal commutor} corresponds to promotion on $r$-fans of Dyck paths.
\end{lemma}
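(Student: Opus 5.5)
The proof splits into two parts: first identify highest weight elements of weight zero with $r$-fans, then identify crystal promotion with the Bender--Knuth composite.

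\emph{Step 1: highest weight elements of weight zero are $r$-fans.} Each word $b_1\otimes\cdots\otimes b_{2n}$ in $(B_r^{\mathrm{spin}})^{\otimes 2n}$ gives a lattice walk $\mu^0=0,\ \mu^i=\mu^{i-1}+b_i$ in the weight lattice of type $B_r$, using the coordinates $\epsilon_1,\dots,\epsilon_r$ rescaled by $2$ so that spin weights become $(\pm1,\dots,\pm1)$. I would use the standard criterion for tensor products of crystals, valid under the tensor convention in which $b\otimes c$ is highest weight iff $b$ is highest weight and $\varepsilon_i(c)\le\varphi_i(b)$. Applied iteratively, it says the word is highest weight iff every prefix weight $\mu^i$ lies in the dominant chamber, because $B_r^{\mathrm{spin}}$ is minuscule: all its weights form one Weyl orbit and each string has length at most one. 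For $B_r$, with the paper's reversed ordering $(\mu_r,\dots,\mu_1)$ against $(\epsilon_1,\dots,\epsilon_r)$, dominance means $\mu_r\ge\mu_{r-1}\ge\cdots\ge\mu_1\ge0$. The spin weight has its last coordinate $\epsilon_r$ with short root, so the condition is $\mu_1\ge 0$ rather than $|\mu_{2}|\ge\mu_1$ as in type $D$. Weight zero means $\mu^{2n}=0$.

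The coordinate sequences $(\mu_k^i)_i$ then change by $\pm1$ at each step, start and end at $0$, stay nonnegative, and are pointwise nested. These are exactly $r$-fans of Dyck paths of semilength $n$, and the correspondence is visibly bijective.

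\emph{Step 2: crystal promotion equals the Bender--Knuth composite.} Here I would invoke Lenart's (or Henriques--Kamnitzer's) description of the commutor $\sigma_{B,B^{\otimes(2n-1)}}$ on highest weight elements via growth diagrams. Commuting the first factor past the remaining ones is a sequence of elementary commutors $B\otimes\cdot$, each of which, restricted to highest weight paths, is governed by a local rule on a square $\kappa\to\mu\to\nu$ versus $\kappa\to\lambda\to\nu$. For a minuscule crystal, the local rule is that $\lambda$ is the unique dominant weight in the Weyl orbit of $\kappa+\nu-\mu$ when $\mu\ne\kappa+\nu-\mu$ is handled appropriately. I would check this case by case: if $\kappa+\nu-\mu$ is dominant it is the answer; otherwise it fails dominance only in one simple coordinate, and reflecting it back gives the unique dominant alternative. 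In type $B$ the Weyl group acts by signed permutations, so the dominant representative is exactly $\dom$. The growth diagram \eqref{eq:promotion-diag} then computes $\Pr$ as the composite of the $\BK_i$.

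The main obstacle is justifying this local rule rigorously. Rather than reprove it, I would cite Pappe et al.\ and Lenart, and confirm the rule by a short verification. The key point is that among weights $\lambda$ differing from both $\kappa$ and $\nu$ by spin weights, the highest weight condition forces $\lambda$ to be the reflection of $\kappa+\nu-\mu$. Once that is checked, a small case check using Figure~\ref{fig:bk-local} is enough.
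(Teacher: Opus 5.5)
Your Step 1 is correct and more detailed than the paper. Under Kashiwara's tensor rule, the minuscule property (every string has length at most one) turns the condition $\varepsilon_i(b_{k+1})\le\varphi_i(b_1\otimes\cdots\otimes b_k)$ into dominance of the prefix weight $\mu^{k+1}$. In the paper's reversed coordinates, $B_r$-dominance is exactly the fan condition $\mu_r\ge\cdots\ge\mu_1\ge 0$. One small slip: the type $D$ condition would be $\mu_2\ge|\mu_1|$, not $|\mu_2|\ge\mu_1$. For Step 2 you end up where the paper does: you cite Lenart's growth-diagram realization of the commutor, as used in \cite{pappe2024promotion}. That citation is the entire content of the paper's argument for the second half.

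The problem is the justification you add around that citation. It is wrong and should be dropped.

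\emph{The commutor does not factor.} The Henriques--Kamnitzer $\sigma$ satisfies the cactus (coboundary) axioms, not the hexagon axiom, so it is not a composite of commutors of adjacent factors. In fact $\sigma_{B,B}$ is the identity, because $B\otimes B$ is multiplicity-free for minuscule $B$, so no local rule can come from it directly. The local rule instead comes from the cactus relations acting on highest weight words, as in \cite{pfannerer2020promotion}, or from Lenart's analysis.

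\emph{The proposed verification fails already for $r=3$.} Take the $3$-fan $(000,111,220,111,000)$ at $i=2$, so that $\kappa=\nu=(1,1,1)$ and $\mu=(2,2,0)$.
\begin{itemize}
\item $\kappa+\nu-\mu=(0,0,2)$ violates only $\mu_2\ge\mu_1$. But the corresponding simple reflection gives $(0,2,0)$, which is still not dominant; two reflections are needed to reach $\dom(0,0,2)=(2,0,0)$.
\item More importantly, the dominant weights that differ from both $\kappa$ and $\nu$ by a spin weight are $(2,2,2)$, $(2,2,0)$, $(2,0,0)$ and $(0,0,0)$. So the highest weight condition does not force $\lambda$ at all.
\item $\BK_2$ pairs $(2,2,0)\leftrightarrow(2,0,0)$ and $(2,2,2)\leftrightarrow(0,0,0)$. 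A different involution of this four-element set would be equally consistent with dominance.
\end{itemize}
Which involution actually occurs is genuine information carried by the commutor. It is the minuscule local rule $\lambda=\dom(\kappa+\nu-\mu)$, going back to van Leeuwen and matched with $\sigma$ by Lenart. The second half of the lemma must therefore rest on that cited theorem, exactly as in the paper, not on a dominance case check.
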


\begin{remark} 
There are two conventions for the order of tensor factors in crystal graphs. Here for convenience we are using the Kashiwara conventions, while~\cite{pappe2024promotion} use the conventions from~\cite{bump2017crystal}.
\end{remark}

\subsection{Symplectic invariants and noncrossing perfect matchings}

In order to prove our cyclic sieving result, we need a bijection from $2$-fans of Dyck paths to $3$-noncrossing perfect matchings which intertwines promotion and rotation. (This is because $3$-noncrossing perfect matchings index a basis of the degree two part of the grove algebra.)

Let $n \ge 1$ and $r \ge 1$. A \dfn{perfect matching} on $[2n] := \{1,2,\dots,2n\}$ is a partition of~$[2n]$ into $n$ disjoint pairs, which we represent as chords in a chord diagram with $2n$ vertices. A perfect matching $M$ is called \dfn{$(r+1)$-noncrossing} if it does not contain~$r+1$ mutually crossing arcs. That is, the perfect matching $M$ is $(r+1)$-noncrossing if there do not exist pairs $(i_1,j_1), (i_2,j_2), \dots, (i_{r+1},j_{r+1}) \in M$ such that $i_1 < i_2 < \dots < i_{r+1} < j_1 < j_2 < \dots < j_{r+1}$. A $2$-noncrossing perfect matching is usually just called \dfn{noncrossing}.

There is a remarkable bijection due to Sundaram~\cite{sundaram1986combinatorics} that maps \dfn{$r$-symplectic oscillating tableaux} of weight zero to $(r+1)$-noncrossing perfect matchings.

We use the following partition notation. A partition is a weakly decreasing sequence $\lambda=(\lambda_1,\lambda_2,\dots)$ of nonnegative integers with finitely many nonzero parts; we omit trailing zeroes and identify $\lambda$ with its Young diagram. Thus $21$ denotes the partition $(2,1)$ and $11$ denotes $(1,1)$. For partitions $\alpha$ and $\beta$, we write $\alpha\subseteq\beta$ if $\alpha_i\leq \beta_i$ for all $i$, and define their intersection by
\[
(\alpha\cap\beta)_i=\min(\alpha_i,\beta_i).
\]
Let $\lambda'$ denote the conjugate partition, and let $e_k$ be the vector that adds one box to the $k$th part. Finally, for an integer vector $v$ with finitely many nonzero entries, $\operatorname{sort}(v)$ denotes the vector obtained by rearranging its entries in weakly decreasing order, with trailing zeroes omitted.

\begin{definition}[{\cite{sundaram1986combinatorics}}]
    An $r$-symplectic oscillating tableau of length $n$ and weight~$\lambda$ is a sequence of partitions $(\emptyset=\lambda^0,\lambda^1,\dots,\lambda^n=\lambda)$ such that
    \begin{enumerate}
        \item each partition has at most $r$ parts;
        \item for each $1\leq i\leq n$, the partition $\lambda^i$ is obtained from $\lambda^{i-1}$ by adding or removing one box.
    \end{enumerate}
    It has weight zero if $\lambda=\emptyset$.
\end{definition}

In type $C_r$, the crystal $C^{\mathrm{vec}}_r$ of the \dfn{standard representation} (i.e., \dfn{vector representation}) consists of the vertices $\{1,2,\dots,r,\overline{r},\dots,\overline{2},\overline{1}\}$. The $2n$th tensor power of this crystal consists of words $w_1w_2\dots w_{2n}$ in this alphabet.

The $r$-symplectic oscillating tableaux of weight zero and length $2n$ correspond to the highest weight elements of weight zero in the crystal $(C^{\mathrm{vec}}_r)^{\otimes 2n}$ by setting
\begin{equation}\label{eq:word-tableau-dictionary}
w_i = \begin{cases}
k&\text{if $\lambda^i=\lambda^{i-1}+e_k$},\\
\overline{k}&\text{if $\lambda^i=\lambda^{i-1}-e_k$}.
\end{cases}
\end{equation}

Promotion on $r$-symplectic oscillating tableaux is defined via \eqref{eq:crystal commutor} and can be computed with the same local rule as for fans of Dyck paths using \eqref{eq:promotion-diag}.

We now give Roby's growth-diagram description~\cite{Roby1991} of Sundaram's map for oscillating tableaux of weight zero.
\begin{definition}\label{def:sundaram-map}
Let $\mathcal{O}=(\emptyset=\lambda^0,\lambda^1,\dots,\lambda^n=\emptyset)$ be an $r$-symplectic oscillating tableau of weight zero. Define a triangular growth diagram with vertex labels $\gamma_{i,j}$ for $0\leq i\leq j\leq n$ and cells $\mathsf C_{i,j}$ for $1\leq i<j\leq n$. The cell $\mathsf C_{i,j}$ has corner labels
\[
\begin{array}{cc}
\gamma_{i-1,j-1} & \gamma_{i,j-1}\\[-2pt]
\gamma_{i-1,j} & \gamma_{i,j}
\end{array}.
\]
Set the diagonal labels to be the oscillating tableau:
\[
\gamma_{i,i}=\lambda^i \qquad (0\leq i\leq n),
\]
and set the first subdiagonal to be the common smaller partition:
\[
\gamma_{i,i+1}=\lambda^i\cap\lambda^{i+1}\qquad (0\leq i<n).
\]
Fill the remaining vertices and the cell contents recursively, in increasing order of $j-i$, using the following local rule. Suppose the three relevant already-filled corner labels are $\kappa=\gamma_{i-1,j-1},\mu=\gamma_{i,j-1},\nu=\gamma_{i,j}$ and the fourth corner label to be determined is $\lambda=\gamma_{i-1,j}$.
\begin{enumerate}
\item If $\kappa=\nu$ and $\mu=\kappa+e_1$, put a cross in the cell and set
\[
\lambda=\kappa.
\]
\item Otherwise, leave the cell empty and determine $\lambda$ by
\[
\lambda'=\operatorname{sort}(\kappa'+\nu'-\mu'),
\]
where missing parts of the conjugate partitions are treated as zero.
\end{enumerate}

Finally, define
\[
\mathcal{M}(\mathcal{O})
=
\bigl\{\{i,j\}:\text{ the cell }\mathsf C_{i,j}\text{ contains a cross}\bigr\}.
\]
\end{definition}

\begin{example}\label{ex:sundaram-growth}
    $\mathcal{O} = (\emptyset,1,11,21,2,1,\emptyset)$ is a $2$-symplectic oscillating tableau of weight zero. Its growth diagram is
    \begin{center}
\begin{tikzpicture}[scale=0.92]
\tikzset{part/.style={fill=white,inner sep=0.6pt,font=\scriptsize}}
\tikzset{idx/.style={font=\footnotesize}}
\foreach \y in {1,...,7}
   \draw (1,8-\y)--(\y,8-\y);
\foreach \x in {1,...,7}
   \draw (\x,8-\x)--(\x,1);
\foreach \x in {1,2,...,6}
   \node[idx] at (\x.5,0.55) {$\x$};
\foreach \y in {1,2,...,6}
   \node[idx] at (0.45,8-\y.5) {$\y$};
\node[font=\Large] at (1.5,8-4.5) {$\times$};
\node[font=\Large] at (3.5,8-5.5) {$\times$};
\node[font=\Large] at (2.5,8-6.5) {$\times$};
\node[part] at (.8,8-.8) {$\emptyset$};
\node[part] at (.8,8-1.8) {$\emptyset$};
\node[part] at (.8,8-2.8) {$\emptyset$};
\node[part] at (.8,8-3.8) {$\emptyset$};
\node[part] at (.8,8-4.8) {$\emptyset$};
\node[part] at (.8,8-5.8) {$\emptyset$};
\node[part] at (.8,8-6.8) {$\emptyset$};
\node[part] at (1.8,8-1.8) {$1$};
\node[part] at (1.8,8-2.8) {$1$};
\node[part] at (1.8,8-3.8) {$1$};
\node[part] at (1.8,8-4.8) {$\emptyset$};
\node[part] at (1.8,8-5.8) {$\emptyset$};
\node[part] at (1.8,8-6.8) {$\emptyset$};
\node[part] at (2.8,8-2.8) {$11$};
\node[part] at (2.8,8-3.8) {$11$};
\node[part] at (2.8,8-4.8) {$1$};
\node[part] at (2.8,8-5.8) {$1$};
\node[part] at (2.8,8-6.8) {$\emptyset$};
\node[part] at (3.8,8-3.8) {$21$};
\node[part] at (3.8,8-4.8) {$2$};
\node[part] at (3.8,8-5.8) {$1$};
\node[part] at (3.8,8-6.8) {$\emptyset$};
\node[part] at (4.8,8-4.8) {$2$};
\node[part] at (4.8,8-5.8) {$1$};
\node[part] at (4.8,8-6.8) {$\emptyset$};
\node[part] at (5.8,8-5.8) {$1$};
\node[part] at (5.8,8-6.8) {$\emptyset$};
\node[part] at (6.8,8-6.8) {$\emptyset$};
\end{tikzpicture}
\end{center}
We read off $\mathcal{M}(\mathcal{O})= \{\{1,4\},\{2,6\},\{3,5\}\}$.
\end{example}

Pfannerer--Rubey--Westbury~\cite{pfannerer2020promotion} have shown that Sundaram's bijection intertwines promotion and \dfn{rotation}. In other words, we have:

\begin{lemma}[{\cite{pfannerer2020promotion}}]\label{lem:map to matchings}
Let $C^{\mathrm{vec}}_r$ be the crystal for the standard representation in type~$C_r$. The map in \cref{def:sundaram-map} is a bijection from highest weight elements of weight zero in $(C^{\mathrm{vec}}_r)^{\otimes 2n}$ to $(r+1)$-noncrossing perfect matchings on $[2n]$ that intertwines promotion and rotation.
\end{lemma}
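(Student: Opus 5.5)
The plan has two parts: show that Sundaram's map $\mathcal{M}$ is a bijection with the right image, and show that it carries promotion to rotation.

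\emph{Bijectivity.} First we check that the growth diagram of \cref{def:sundaram-map} is well defined. By induction on $j-i$, every label $\gamma_{i,j}$ is a partition with at most $r$ parts. Moreover, adjacent labels along each row and column differ by at most one box. This is the usual ``Fomin local rule'' property for the rule $\lambda' = \operatorname{sort}(\kappa'+\nu'-\mu')$.

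Since the bottom row is $\emptyset$, each index $k$ lies in exactly one crossed cell $\mathsf C_{i,j}$ with $k\in\{i,j\}$. So $\mathcal{M}(\mathcal{O})$ is a perfect matching. The point is that a box added at step $k$ along the diagonal must eventually be removed as one moves down column $k$ (or across row $k$). This happens at exactly one cell, where a cross is placed.

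Invertibility comes from the fact that the local rule can be run backwards. Starting from the matching (crosses) and the empty boundary row, the inverse local rule reconstructs every label, and hence $\mathcal{O}$. This is Roby's standard argument.

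The $(r+1)$-noncrossing condition comes from Greene-type invariants of the growth diagram. The number of rows of $\gamma_{i,j}$ records the maximum size of a mutually crossing family of chords that is ``visible'' from the rectangle with corner $(i,j)$. Therefore the bound of $r$ parts is equivalent to the absence of $r+1$ mutually crossing arcs. For the bijectivity part I would cite Sundaram and Roby, together with the Chen--Deng--Du--Stanley--Yan treatment of crossings via vacillating/oscillating tableaux, rather than reprove it.

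\emph{Equivariance.} This is the main obstacle. I would attack it by combining two growth diagrams. By the text after \eqref{eq:word-tableau-dictionary}, promotion of $\mathcal{O}$ is computed by the local rule \eqref{eq:promotion-diag}. This rule deletes the first step $\lambda^1=(1)$ and recomputes the rest of the sequence.

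I would then glue the promotion strip onto the triangular Sundaram diagram, so as to obtain a larger ``cylindrical'' diagram. The needed identity is a compatibility between the two rules: the Sundaram local rule for a cell, composed with the promotion local rule on an adjacent strip, equals the promotion local rule composed with the Sundaram rule shifted by one. Granting this, the triangle built on $\Pr(\mathcal{O})$ coincides with the original triangle with its first row and column removed and one extra row and column appended. Concretely:
\begin{itemize}
\item the chord containing $1$, say $\{1,j\}$, becomes $\{j-1,2n\}$;
\item every other chord $\{a,b\}$ becomes $\{a-1,b-1\}$.
\end{itemize}
This is exactly rotation.

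Checking the compatibility amounts to a finite case analysis of $\operatorname{sort}/\dom$ on adjacent vertices. A cleaner route is to invoke that Pfannerer--Rubey--Westbury prove precisely this via crystal commutors. On that route, $\mathcal{M}$ is realized as a composite of crystal operations (local rules coming from the $\mathfrak{sp}_{2r}$ vector crystal). Under that realization, rotation of chord diagrams corresponds to the commutor $\sigma_{B,B^{\otimes(2n-1)}}$ of \eqref{eq:crystal commutor}.
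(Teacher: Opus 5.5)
Your proposal takes essentially the same approach as the paper: the paper gives no independent proof of this lemma and simply cites Pfannerer--Rubey--Westbury (with bijectivity going back to Sundaram and Roby), and your sketch ultimately rests on those same citations. One correction: PRW's argument is essentially your first, gluing route, namely compatibility of van Leeuwen's promotion local rules (which Lenart connected to the crystal commutor) with Fomin/Roby-type growth diagrams; it does not realize $\mathcal{M}$ itself as a composite of crystal operations.
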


\subsection{\texorpdfstring{$2$}{2}-fans of Dyck paths and \texorpdfstring{$3$}{3}-noncrossing perfect matchings}

\newcommand{\twocell}[2]{
  \begin{tikzpicture}[scale=0.24]
    \draw[line width=0.7pt] (0,0) rectangle (1,2);
    \draw[line width=0.7pt] (0,1) -- (1,1);
    \node at (0.5,1.5) {\tiny $#1$};
    \node at (0.5,0.5) {\tiny $#2$};
  \end{tikzpicture}
}

The root systems of type $B_2$ and $C_2$ are isomorphic. Under this exceptional isomorphism, the spin representation of type $B_2$ and the standard representation of type~$C_2$ correspond and their crystals are isomorphic, exchanging the roles of $f_1$ and $f_2$. More precisely, see~\cref{fig:B2-C2-crystal-isomorphism}. The vertex map in this figure is
\[
\psi:\quad
\twocell{+}{+}\longmapsto 1,\qquad
\twocell{-}{+}\longmapsto 2,\qquad
\twocell{+}{-}\longmapsto \overline{2},\qquad
\twocell{-}{-}\longmapsto \overline{1}.
\]
Equivalently, $\psi$ is the crystal isomorphism sending the highest weight vertex of $B^{\mathrm{spin}}_2$ to the highest weight vertex of $C^{\mathrm{vec}}_2$ and exchanging the two simple-root directions.

Thus, combining \Cref{lem:promotion_crystal,lem:map to matchings} in this case $r=2$, we obtain:

\begin{thm}
\label{thm:promisrot}
There exists an explicit bijection between $2$-fans of Dyck paths of semilength $n$ and $3$-noncrossing perfect matchings on $[2n]$ that intertwines promotion and rotation.
\end{thm}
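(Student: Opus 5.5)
The plan is to compose the bijections already in hand. By \cref{lem:promotion_crystal} with $r=2$, $2$-fans of Dyck paths of semilength $n$ are identified with highest weight elements of weight zero in $(B_2^{\mathrm{spin}})^{\otimes 2n}$. Under this identification, promotion of fans corresponds to crystal promotion $\Pr(u)=\sigma_{B,B^{\otimes(2n-1)}}(u)$.

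Next we apply $\psi^{\otimes 2n}$. The first point is that $\psi$ is an isomorphism of crystals after relabelling the Dynkin diagram via the exceptional isomorphism $B_2\cong C_2$, which swaps the indices $1\leftrightarrow 2$. This can be checked directly from the four vertices and the edges in \cref{fig:B2-C2-crystal-isomorphism}. Since the tensor product rule for crystals is uniform in the Cartan datum, $\psi^{\otimes 2n}$ is then a crystal isomorphism $(B_2^{\mathrm{spin}})^{\otimes 2n}\to (C_2^{\mathrm{vec}})^{\otimes 2n}$, up to the same relabelling of colors. In particular, it preserves highest weight elements and maps weight zero to weight zero, because the weight lattices are identified compatibly.

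The main step, and the one needing the most care, is that $\psi^{\otimes 2n}$ commutes with promotion. For this I would argue that the Henriques--Kamnitzer commutor is defined intrinsically. Lusztig's involution $\xi_B$ is the unique set map sending highest weight to lowest weight elements with $e_i\leftrightarrow f_{\theta(i)}$, where $\theta$ is the diagram automorphism induced by $-w_0$. The commutor is $\sigma_{B,C}(b\otimes c)=\xi(\xi(c)\otimes\xi(b))$. In types $B_2$ and $C_2$ we have $-w_0=\mathrm{id}$, so $\theta$ is trivial, and it is trivially compatible with the swap $1\leftrightarrow 2$. Hence any crystal isomorphism, even one that relabels colors through a Dynkin diagram isomorphism, intertwines Lusztig involutions on each connected component and therefore intertwines commutors. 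Applying this to $B=B_2^{\mathrm{spin}}$ and $B^{\otimes(2n-1)}$ shows that $\psi^{\otimes 2n}$ carries spin promotion to vector promotion on invariants. As a sanity check one could instead verify that the local rule \eqref{eq:promotion-diag} is transported by $\psi$, but I would rely on the abstract argument.

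Finally, we pass through the dictionary \eqref{eq:word-tableau-dictionary} to $2$-symplectic oscillating tableaux of weight zero and length $2n$, and apply the Sundaram map $\mathcal M$ of \cref{def:sundaram-map}. By \cref{lem:map to matchings} with $r=2$, this is a bijection onto $3$-noncrossing perfect matchings of $[2n]$ that sends promotion to rotation. The composite
\[
\mathcal M\circ\psi^{\otimes 2n}\circ(\text{identification of \cref{lem:promotion_crystal}})
\]
is explicit, since each piece is given by a concrete rule. It is bijective as a composite of bijections, and it intertwines promotion of $2$-fans with rotation, which proves the theorem.
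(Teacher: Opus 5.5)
Your proposal is correct and is essentially the paper's proof: identify $2$-fans with weight-zero highest weight elements of $(B_2^{\mathrm{spin}})^{\otimes 2n}$ via \cref{lem:promotion_crystal}, transport along $\psi^{\otimes 2n}$ using naturality of the Henriques--Kamnitzer commutor, then pass through the dictionary \eqref{eq:word-tableau-dictionary} and Sundaram's map using \cref{lem:map to matchings}. Your extra justification of why the commutor is natural under the color-swapping isomorphism fills in a point the paper only asserts; that justification rests on Lusztig's involution being intrinsic, with $-w_0=\mathrm{id}$ in types $B_2$ and $C_2$.
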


\begin{proof}
Let $\mathcal{F}=(\emptyset=\mu^0,\mu^1,\dots,\mu^{2n}=\emptyset)$ be a $2$-fan of Dyck paths.  As in \cref{lem:promotion_crystal}, form the tensor
\[
b(\mathcal{F})
=
(\mu^1-\mu^0)\otimes(\mu^2-\mu^1)\otimes\cdots\otimes(\mu^{2n}-\mu^{2n-1})
\in (B^{\mathrm{spin}}_2)^{\otimes 2n},
\]
where each difference is read as a vertex of the type $B_2$ spin crystal.  By \cref{lem:promotion_crystal}, this gives a bijection from $2$-fans of semilength $n$ to the highest weight elements of weight zero in $(B^{\mathrm{spin}}_2)^{\otimes 2n}$, and this bijection intertwines fan promotion with crystal promotion.

Apply $\psi$ tensor factor by tensor factor.  Since $\psi$ is the exceptional $B_2/C_2$ crystal isomorphism, with the two simple-root directions interchanged, the map
\[
\psi^{\otimes 2n}\colon (B^{\mathrm{spin}}_2)^{\otimes 2n}\longrightarrow (C^{\mathrm{vec}}_2)^{\otimes 2n}
\]
sends highest weight elements of weight zero bijectively to highest weight elements of weight zero.  Moreover, crystal promotion is defined from the crystal commutor~\eqref{eq:crystal commutor}, and the commutor is natural with respect to crystal isomorphisms; hence $\psi^{\otimes 2n}$ intertwines the type $B_2$ and type $C_2$ promotion operators.

Now interpret the resulting type $C_2$ word as a $2$-symplectic oscillating tableau by the dictionary preceding \cref{def:sundaram-map}; call this tableau $\mathcal{O}(\mathcal{F})$.  Finally apply Sundaram's map $\mathcal{M}$ from \cref{def:sundaram-map}.  By \cref{lem:map to matchings} with $r=2$, this last map is a bijection from the highest weight elements of weight zero in $(C^{\mathrm{vec}}_2)^{\otimes 2n}$ to $3$-noncrossing perfect matchings on $[2n]$ and intertwines crystal promotion with rotation.

Thus the desired bijection is the explicit composition
\[
\mathcal{F}
\longmapsto b(\mathcal{F})
\longmapsto \psi^{\otimes 2n}(b(\mathcal{F}))
\longmapsto \mathcal{O}(\mathcal{F})
\longmapsto \mathcal{M}(\mathcal{O}(\mathcal{F})),
\]
which is equivariant with respect to the relevant cyclic action because the bijections in this composition are each appropriately equivariant.
\end{proof}

\begin{figure}
\begin{center}
\begin{tikzpicture}[x=1cm,y=1cm]
\tikzset{cnode/.style={inner sep=2pt,font=\large}}

\node (S1) at (0,1.8) {\twocell{+}{+}};
\node (S2) at (3,1.8) {\twocell{-}{+}};
\node (S3) at (6,1.8) {\twocell{+}{-}};
\node (S4) at (9,1.8) {\twocell{-}{-}};

\node[cnode] (C1) at (0,0) {$1$};
\node[cnode] (C2) at (3,0) {$2$};
\node[cnode] (C3) at (6,0) {$\overline{2}$};
\node[cnode] (C4) at (9,0) {$\overline{1}$};

\node[anchor=east,font=\small] at (-.7,1.8) {$B^{\mathrm{spin}}_2$};
\node[anchor=east,font=\small] at (-.7,0) {$C^{\mathrm{vec}}_2$};

\draw[->, draw=red!60!black,ultra thick]  (S1) -- node[above] {\small 2} (S2);
\draw[->, draw=blue!60!black,ultra thick] (S2) -- node[above] {\small 1} (S3);
\draw[->, draw=red!60!black,ultra thick]  (S3) -- node[above] {\small 2} (S4);

\draw[->, draw=blue!60!black,ultra thick] (C1) -- node[below] {\small 1} (C2);
\draw[->, draw=red!60!black,ultra thick]  (C2) -- node[below] {\small 2} (C3);
\draw[->, draw=blue!60!black,ultra thick] (C3) -- node[below] {\small 1} (C4);

\draw[densely dotted,thick] (S1) -- (C1);
\draw[densely dotted,thick] (S2) -- (C2);
\draw[densely dotted,thick] (S3) -- (C3);
\draw[densely dotted,thick] (S4) -- (C4);
\node[font=\small] at (9.5,.9) {$\psi$};
\end{tikzpicture}
\end{center}
\caption{The exceptional isomorphism between the type $B_2$ spin crystal and the type $C_2$ vector crystal. The dotted lines give the vertex map $\psi$; red and blue arrows show that $f_2$ in type $B_2$ corresponds to $f_1$ in type $C_2$, and $f_1$ in type $B_2$ corresponds to $f_2$ in type $C_2$.}
\label{fig:B2-C2-crystal-isomorphism}
\end{figure}

\begin{example} 
Consider the $2$-fan of Dyck paths
\[
\mathcal{F}=(00,11,20,31,22,11,00).
\]
Writing $\mathcal{F}=(\mu^0,\mu^1,\dots,\mu^6)$, its consecutive differences are
\begin{gather*}
\mu^1-\mu^0=(1,1),\quad
\mu^2-\mu^1=(1,-1),\quad
\mu^3-\mu^2=(1,1),\\
\mu^4-\mu^3=(-1,1),\quad
\mu^5-\mu^4=(-1,-1),\quad
\mu^6-\mu^5=(-1,-1).
\end{gather*}
With the convention that $(\epsilon_2,\epsilon_1)$ is displayed as
$\twocell{\epsilon_1}{\epsilon_2}$, where $+$ stands for $+1$ and $-$ stands for $-1$, this gives the spin-crystal tensor element
\[
b(\mathcal{F})
=
\twocell{+}{+}\otimes
\twocell{-}{+}\otimes
\twocell{+}{+}\otimes
\twocell{+}{-}\otimes
\twocell{-}{-}\otimes
\twocell{-}{-}
\in (B^{\mathrm{spin}}_2)^{\otimes 6}.
\]
Applying $\psi$ tensor factor by tensor factor gives the type $C_2$ word
\[
\psi(b(\mathcal{F}))
=
1\otimes 2\otimes 1\otimes \overline{2}\otimes \overline{1}\otimes \overline{1}
\in (C^{\mathrm{vec}}_2)^{\otimes 6}.
\]
Under the oscillating tableau dictionary for $C^{\mathrm{vec}}_2$, this word adds a box in part $1$, adds a box in part $2$, adds a box in part $1$, removes a box in part $2$, and then removes two boxes from part $1$. Hence it gives
\[
\mathcal{O}=(\emptyset,1,11,21,2,1,\emptyset),
\]
which is exactly the oscillating tableau in \cref{ex:sundaram-growth}. Therefore Sundaram's bijection sends $\mathcal{F}$ to
\[
\mathcal{M}(\mathcal{O})=\{\{1,4\},\{2,6\},\{3,5\}\},
\]
that is, to the $3$-noncrossing perfect matching
\begin{tikzpicture}[line width=1pt,baseline=(current bounding box.center)]
    \node (a) [draw=none, minimum size=0.9cm, regular polygon, regular polygon sides=6] at (0,0) {};
    \foreach \n [count=\i from 0, remember=\n as \lastn, evaluate={\i+\lastn}] in {1,2,...,6}
        \path (a.center) -- (a.corner \n) node[pos=1.5] {$\n$};
    \draw (a.corner 1) to (a.corner 4);
    \draw (a.corner 2) to (a.corner 6);
    \draw (a.corner 3) to (a.corner 5);
\end{tikzpicture}. Thus, this is the image of $\mathcal{F}$ under the bijection from \cref{thm:promisrot}.
\end{example}

\begin{remark}
As mentioned, we believe that the bijection from~\cref{thm:promisrot} intertwining promotion of $2$-fans of Dyck paths and rotation of $3$-noncrossing perfect matchings is the same as that constructed by Chen et al.~in~\cite{chen2007crossings}, although we will not prove that here.
\end{remark}

\section{Electrical networks and canonical bases} \label{sec:electrical}

\subsection{The Grassmannian and plabic networks}

The \dfn{(complex) Grassmannian} $\mathrm{Gr}(k,n)$ is the space of all $k$-dimensional subspaces of $\mathbb{C}^n$. The Grassmannian can be embedded into projective space via \dfn{Pl\"ucker coordinates}. If $W$ is the row space of a $k\times n$ matrix, then the Pl\"ucker coordinates of $W$ are the $k\times k$ minors of that matrix. Choosing a different matrix to represent $W$ has the effect of multiplying all minors by the same constant, so the Pl\"ucker coordinates determine a point of~$\mathbb{P}^{\binom{n}{k}-1}$. The image of this embedding is a subvariety and the equations which cut it out are called the \dfn{Pl\"ucker relations}.

Postnikov~\cite{postnikov2006total} introduced plabic networks to study the \dfn{totally nonnegative Grassmannian} $\mathrm{Gr}(k,n)_{\geq 0}$, the subset of the Grassmannian for which all Pl\"ucker coordinates are nonnegative real numbers. A \dfn{plabic network} is a planar graph embedded in a disk with positive real edge weights and a bicoloring of its vertices. For each size-$k$ subset $I$ of $[n]$ and each plabic network $N$ with $n$ boundary vertices Postnikov defines the \dfn{boundary measurement} $\Delta_I(N)$ and shows that they satisfy the Pl\"ucker relations. Thus, each plabic network defines a point in the totally nonnegative Grassmannian.

\subsection{The space of electrical networks}

A \dfn{circular planar electrical network} $e(\Gamma, \omega)$ is a planar graph $\Gamma$ embedded in a disk with edge weights $\omega$. If the edge weights are interpreted as resistances, then applying a voltage to the boundary vertices induces a current in the boundary vertices in accordance with Kirchhoff's laws. The matrix $M_R(e)$ encoding the linear map from input voltages to currents is the {\em response matrix} of the electrical network $e$. Two electrical networks are electrically equivalent if they have the same response matrix, and \dfn{the space of electrical networks}~$E_n$ is the set of all electrical networks modulo electrical equivalence. 

Lam~\cite{lam2018electroid} uses plabic networks to construct an embedding from the space of electrical networks to the totally nonnegative Grassmannian $ \mathrm{Gr}(n-1,2n)_{\geq 0}$.
Using a version of the generalized Temperley's trick, Lam defines a map $N$ from electrical networks with $n$ boundary vertices to plabic networks with $2n$ boundary vertices such that electrically equivalent networks are sent to move-equivalent plabic networks. As each plabic network represents a point in $ \mathrm{Gr}(n-1,2n)_{\geq 0}$, he obtains a map 
\[
\iota: E_n \hookrightarrow \mathrm{Gr}(n-1,2n)_{\geq 0}.
\]
He then proves this map is injective, and defines \dfn{the electroid variety} $\mathcal{X}_n$ to be the Zariski closure of its image.

\begin{remark}
    The space of electrical networks is not compact, so Lam~\cite{lam2018electroid} introduces ``cactus networks'' to remedy this. We only make use of the Zariski closure which is the same for either space, so we stick to electrical networks for simplicity.
\end{remark}

\subsection{The Lagrangian Grassmannian}
Recall that if $V$ is a $2r$-dimensional complex vector space with a symplectic form $\Lambda$, the \dfn{Lagrangian Grassmannian} $\mathrm{LG}(r,V)$ is the space of all maximal (hence, $r$-dimensional) isotropic (with respect to $\Lambda$) subspaces of $V$. We also denote this Lagrangian Grassmannian by $\mathrm{LG}(r,2r)$.

Bychkov, Gorbounov, Kazakov, and Talalaev \cite{bychkov2023electrical} (see also~\cite{chepuri2026electrical}) connect Lam's results to the representation theory of the symplectic group and the Lagrangian Grassmannian. In particular, Bychkov et al.~show that Lam's embedding can be alternatively defined as follows. Given an electrical network $e \in E_n$, let its response matrix be $M_R(e) = (x_{ij})$ and define the matrix \[
\Omega_n(e) = \begin{bmatrix}
    x_{11} & 1 & -x_{12} & 0 & x_{13} & 0 &\cdots & (-1)^n\\
    -x_{21} & 1 & x_{22} & 1 & -x_{23} & 0&\cdots & 0\\
    x_{31} & 0 & -x_{32} & 1 & x_{33} & 1&\cdots & 0\\
    \vdots &\vdots &\vdots\vdots &\vdots &\vdots&\vdots&\ddots &\vdots
\end{bmatrix}
\]
Then $\iota(e)$ is the row span of $\Omega_n(e)$.
Using this characterization of the embedding~$\iota$, Bychkov, Gorbounov, Kazakov, and Talalaev show that the image is contained inside the Lagrangian Grassmannian $\mathrm{LG}(n-1, 2n-2)$ for a specific symplectic form. Specifically, for $1\leq i\leq 2n-2$ let $w_i$ denote the $2n$-dimensional vector in~$\mathbb{C}^{2n}$ with a $1$ in position $i$ and $i+2$ and $0$'s elsewhere, e.g. $w_1 = (1,0,1,0,0,\cdots,0)$. Let~$V$ be the $(2n-2)$-dimensional subspace of $\mathbb{C}^{2n}$ which is the span of the $w_i$ and let~$\beta$ be the basis $\{w_i\}_{1\leq i\leq 2n-2}$. Then, Bychkov et al.~\cite{bychkov2023electrical} prove the following theorem.

\begin{thm}[{\cite{bychkov2023electrical}}]
\label{bychkovisotropic}
    Let $e$ be an electrical network. Then $\iota(e) \subseteq V$ and is isotropic with respect to the symplectic form 
\[
\Lambda_{2n-2} = \begin{bmatrix}
    0 & 1& 0& 0& 0&\cdots& 0\\
-1 &0& -1 &0 &0&\cdots& 0\\
0 &1& 0 &1 &0&\cdots &0\\
0&0&-1&0&-1&\cdots &0\\
\vdots &\vdots  &\vdots&\vdots&\vdots&\ddots &\vdots
\end{bmatrix}
\]
where $\Lambda_{2n-2}$ is expressed in the $\beta$ basis. Furthermore, $\mathcal{X}_n$ is isomorphic to the Lagrangian Grassmannian $\mathrm{LG}(n-1, V)$ for this symplectic form.
\end{thm}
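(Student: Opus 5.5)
This statement is a result of Bychkov--Gorbounov--Kazakov--Talalaev, so the plan is essentially to reconstruct their argument directly from the matrix $\Omega_n(e)$. It has three parts: containment $\iota(e)\subseteq V$, isotropy with respect to $\Lambda_{2n-2}$, and the identification of $\mathcal{X}_n$ with $\mathrm{LG}(n-1,V)$. All three should follow from linear algebra on $\Omega_n(e)$, using two facts about response matrices. First, $M_R(e)$ is symmetric. Second, each row of $M_R(e)$ sums to zero, since a constant voltage induces no current.

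\emph{Containment.} First describe $V$ intrinsically. The vectors $w_i=e_i+e_{i+2}$ for $1\le i\le 2n-2$ span exactly the vectors $(v_1,\dots,v_{2n})$ with two alternating-sum conditions, one on the odd coordinates and one on the even coordinates:
\[
\sum_k (-1)^k v_{2k-1}=0, \qquad \sum_k (-1)^k v_{2k}=0 .
\]
This is a dimension count: there are $2n-2$ independent vectors and two independent conditions that they visibly satisfy. For a row of $\Omega_n(e)$, the odd coordinates are the signed entries $\pm x_{ij}$. With the sign pattern chosen as in $\Omega_n(e)$, their alternating sum becomes $\pm\sum_j x_{ij}=0$ by the row-sum property. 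The even coordinates are a pair of consecutive $1$'s, or the wrap-around $1$ and $(-1)^n$ in the first row, and their alternating sum vanishes by inspection.

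\emph{Isotropy.} Next, express each row $r_i$ of $\Omega_n(e)$ in the basis $\beta$ and check that $r_i^{T}\Lambda_{2n-2}r_j=0$ for all $i,j$. The bookkeeping is cleaner if I first rewrite the form on $V$ intrinsically. I expect it to be the restriction of a simple form on $\mathbb{C}^{2n}$ that pairs odd coordinates with the adjacent even coordinates, with alternating signs. With that in hand, the pairing of $r_i$ and $r_j$ reduces to a combination like $x_{ij}-x_{ji}$, plus boundary terms that cancel using the row sums. Symmetry of $M_R(e)$ then gives zero. This sign and boundary bookkeeping, especially at the wrap-around column where $(-1)^n$ appears, is the step I expect to be the main obstacle. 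I would first verify it for $n=2,3$ to pin down the correct intrinsic form. The rank of $\Omega_n(e)$ is $n-1$, half of $\dim V$, so each $\iota(e)$ is Lagrangian.

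\emph{Identification of $\mathcal{X}_n$.} By the above, $\iota(E_n)\subseteq \mathrm{LG}(n-1,V)$, which is closed and irreducible of dimension $\binom{n}{2}$. Response matrices of networks on $n$ boundary vertices form a space of dimension $\binom{n}{2}$: they are the symmetric matrices with zero row sums, and circular planar networks give a Zariski-dense (positive) subset of them. The map from response matrices to $\iota(e)$ is injective, as Lam proved. Therefore the Zariski closure $\mathcal{X}_n$ is an irreducible closed subvariety of $\mathrm{LG}(n-1,V)$ of full dimension, and hence equals it.
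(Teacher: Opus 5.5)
The paper does not prove this statement; it quotes it from Bychkov--Gorbounov--Kazakov--Talalaev. So your argument has to stand on its own, and it does. It is correct: containment follows from the two alternating-sum equations cutting out $V$, isotropy follows from symmetry and zero row sums of $M_R(e)$, and $\mathcal{X}_n=\mathrm{LG}(n-1,V)$ follows by comparing two irreducible closed varieties of dimension $\binom n2$. Your containment paragraph is complete as written.

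For isotropy, your guess of an intrinsic form on $\mathbb{C}^{2n}$ is off. Pulling $\Lambda_{2n-2}$ back from $\beta$-coordinates gives
\[
\Lambda(u,v)=\sum_{k=1}^{n-1}\bigl(A_k(u)\,v_{2k}-A_k(v)\,u_{2k}\bigr),\qquad A_k(u)=\sum_{l=1}^{k}(-1)^{k-l}u_{2l-1},
\]
where $A_k(u)$ is the $w_{2k-1}$-coordinate of $u$. So each even coordinate is paired with a partial alternating sum of odd coordinates, not with an adjacent odd coordinate.

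With this formula, $A_k(r_i)=(-1)^{i+k}\sum_{l\le k}x_{il}$. It follows that $\sum_{k=1}^{n-1}A_k(r_i)\,(r_j)_{2k}=(-1)^{i+j}x_{ij}$ for every $j$: for $2\le j\le n-1$ it is a difference of consecutive partial row sums, for $j=1$ it is immediate, and for $j=n$ it uses the zero row sum. Hence $\Lambda(r_i,r_j)=(-1)^{i+j}(x_{ij}-x_{ji})=0$, exactly the shape you predicted. The wrap-around entry $(-1)^n$ never enters this computation, since $v_{2n}$ is not needed to determine $\beta$-coordinates; it matters only for $\iota(e)\subseteq V$.

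You should also justify $\operatorname{rank}\Omega_n(e)=n-1$. On one hand, $\sum_i(-1)^i r_i=0$, because the columns of $M_R(e)$ sum to zero. On the other hand, the even parts $e_{2i-2}+e_{2i}$ of $r_2,\dots,r_n$ are independent. The same observation gives injectivity directly, without appealing to Lam: $\iota(e)$ projects isomorphically onto the even coordinates, so it determines $r_2,\dots,r_n$ and hence all the $x_{ij}$.

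Finally, the one genuine external input in the last step is that response matrices of circular planar networks have nonempty interior in the real $\binom n2$-dimensional space of symmetric zero-row-sum matrices. This is the well-connected cell of Curtis--Ingerman--Morrow, or equivalently any statement that $\mathcal{X}_n$ has dimension $\binom n2$; cite it rather than asserting it. Then the Zariski closure of the image of the injective morphism $X\mapsto$ (row span of $\Omega_n(X)$) on $\mathbb{C}^{\binom n2}$ is irreducible of dimension $\binom n2$ inside $\mathrm{LG}(n-1,V)$, so the two coincide.
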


\subsection{The cyclic action on the space of electrical networks}

There is a natural cyclic action of order $2n$ on $\mathcal{X}_n$, which is easiest to describe using the identification of~$\mathcal{X}_n$ with $\mathrm{LG}(n-1, V)$ from~\cref{bychkovisotropic}. Namely, let $c\colon \mathbb{C}^{2n} \to \mathbb{C}^{2n}$ be the following map:
\[
c \cdot (x_1, x_2, \dots, x_{2n}) := (x_2, \cdots, x_{2n}, (-1)^{n}x_1)
\]
This $c$ induces an action of $\mathbb{Z}/2n\mathbb{Z}$ on the Lagrangian Grassmannian~$\mathrm{LG}(n-1,V)$, hence under the identification in \cref{bychkovisotropic}, on the electroid variety $\mathcal{X}_n$. In the electrical networks picture, its square $c^2\colon \mathcal{X}_n\to\mathcal{X}_n$ corresponds to rotation of the labels of the electrical network.

\subsection{The grove algebra and the bush basis}

Gao, Lam, and Xu~\cite{gao2025electrical} call the homogeneous coordinate ring of the electroid variety $\mathcal{X}_n$ the \dfn{grove algebra}, denoted~$G_n$, with its degree $m$ piece denoted by~$G_{m,n}$. The grove algebra is governed by the combinatorics of noncrossing set partitions.

Recall that a set partition $\sigma$ of $[n] := \{1,2,\ldots,n\}$ is \dfn{noncrossing} if there do not exist $a<b<c<d$ such that $a,c$ lie in one block of $\sigma$ and $b,d$ lie in a different block of $\sigma$.  Equivalently, if $1,\dots,n$ are placed clockwise on a circle and the elements in each block are joined inside the disk, the convex hulls of distinct blocks do not cross.  We write $\mathcal{NP}(n)$ for the set of noncrossing set partitions of $[n]$. We will also use the Kreweras complement.  Given $\sigma\in\mathcal{NP}(n)$, place
\[
1,\overline{1},2,\overline{2},\dots,n,\overline{n}
\]
clockwise on a circle.  Draw the blocks of $\sigma$ on the unbarred vertices.  There is a unique coarsest noncrossing partition of the barred vertices whose union with $\sigma$ is still noncrossing in this $2n$-point circle; after identifying $\overline{i}$ with $i$, this partition is the \dfn{Kreweras complement} $\mathrm{Krew}(\sigma)$.  The Kreweras complement has order $2n$ on $\mathcal{NP}(n)$, with its square equal to cyclic rotation of the labels. There is a natural bijection between noncrossing partitions of $[n]$ and noncrossing perfect matchings on~$[2n]$ under which Kreweras complement of the set partition corresponds to rotation of the perfect matching.

The grove algebra $G_n$ is generated in degree one by \dfn{grove coordinates} $L_\sigma$ indexed by $\mathcal{NP}(n)$. Grove coordinates can be thought of as analogues of Pl\"ucker coordinates for the grove algebra, and Lam~\cite{lam2018electroid} shows that these two coordinate systems are related by the following formula.

\begin{thm}[{Cf.~\cite[Theorem 5.10]{lam2018electroid}}]
    For any electrical network $e$ and $I \in \binom{[2n]}{n-1}$, we have
    \[
    \Delta_I(N(e)) = \sum_{\substack{\sigma \in \mathcal{NP}(n)\\\sigma \textrm{ concordant with } I}} L_{\sigma}(e).
    \]
\end{thm}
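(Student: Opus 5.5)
The statement is a restatement of Lam's Theorem 5.10, so the plan is to reduce to Lam's construction and track the conventions. Fix an electrical network $e=e(\Gamma,\omega)$ with $n$ boundary vertices, and let $N(e)$ be the plabic network on $2n$ boundary vertices obtained by the generalized Temperley trick. Interleave the boundary labels as $1,\overline{1},2,\overline{2},\dots$, matching the convention used for the Kreweras complement. By Postnikov's boundary measurement (Talaska's formula), $\Delta_I(N(e))$ is a weighted sum over \emph{almost perfect matchings} (dimer configurations) of the bipartite graph $N(e)$ whose boundary behaviour is encoded by $I$. So the first step is to write
\[
\Delta_I(N(e))=\sum_{\text{dimer configurations } \mathcal{D} \text{ of } N(e) \text{ with boundary } I} \mathrm{wt}(\mathcal{D}).
\]

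The second step is to relate dimers on $N(e)$ to groves of $\Gamma$, in the spirit of Temperley's bijection between spanning trees and dimers. The white vertices of $N(e)$ are the vertices and faces of $\Gamma$, and the black vertices are its edges. A dimer configuration then pairs each edge of $\Gamma$ with one of its endpoints or one of its adjacent faces. I would show that the edges of $\Gamma$ matched to primal vertices form a spanning forest $F$ (a grove), and that the remaining edges, matched to faces, form the dual forest. The connectivity of $F$ among the boundary vertices is a noncrossing partition $\sigma$, and the dual forest has the same relation to $\mathrm{Krew}(\sigma)$.

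The boundary subset $I$ records which of the $2n$ boundary nodes are used. The key step is a local count: given the grove $F$ with boundary partition $\sigma$, the dimer configurations lying over $F$ are obtained by choosing, in each tree of $F$, a root direction (and similarly for the dual trees). These choices produce exactly the subsets $I$ that are \emph{concordant} with $\sigma$, each with multiplicity one. Since every grove of type $\sigma$ contributes $\mathrm{wt}(F)$ and $L_\sigma(e)=\sum_{F\text{ of type }\sigma}\mathrm{wt}(F)$, summing over groves gives the stated formula.

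The main obstacle is this fiber count. One has to show that the orientation data on trees and dual trees yields each concordant $I$ exactly once, and no other $I$. One must also check that the weights match with no extra signs or gauge factors, since Lam's normalization of edge weights on $N(e)$ is chosen precisely to make this work. I would handle it by induction on the number of edges of $\Gamma$ via contraction and deletion, or by appealing directly to Lam's definition of concordance, which is designed for this fiber description. Throughout I would rely on \cite[Theorem 5.10]{lam2018electroid} for the precise convention-matching.
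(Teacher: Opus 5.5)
The paper gives no proof of this statement---it is quoted from Lam's Theorem 5.10---and your outline is exactly the argument underlying Lam's result, so it is correct in outline. Your outline consists of the dimer expansion of $\Delta_I(N(e))$, Temperley's bijection between almost perfect matchings of $N(e)$ and groves paired with their dual groves (each primal tree rooted at a boundary vertex, each dual tree at a boundary face), and the identification of root sets with complements of concordant $I$.

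The step you flag as the main obstacle is actually immediate, and no contraction--deletion induction is needed: the $n+1=|\sigma|+|\mathrm{Krew}(\sigma)|$ roots are precisely the boundary nodes outside $I$. The substantive checks are instead the Euler-characteristic argument that the edges matched to primal vertices contain no cycle (so every tree has a unique boundary root), and the weight normalization on $N(e)$.
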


We omit the definition of concordance but note that concordance is preserved under a simultaneous cyclic shift of $I$ and Kreweras complement of $\sigma$. Thus, the cyclic $c\colon \mathcal{X}_n\to\mathcal{X}_n$ permutes the grove coordinates according to Kreweras complement of the indexing noncrossing set partitions (equivalently, rotation of the indexing noncrossing perfect matchings). 

Gao, Lam, and Xu~\cite{gao2025electrical} conjecture the existence of an \dfn{electrical canonical basis} of~$G_{m,n}$ with the following properties.

\begin{conj}[{Cf.~\cite[Conjecture~1.4]{gao2025electrical}}] \label{conj:gao}
For all $m \geq 1$, $G_{m,n}$ has an electrical canonical basis which, among other properties, satisfies the following:
\begin{itemize}
\item its elements are indexed by height $m$ staircase plane partitions of size $n-1$;
\item the induced action of $\langle c \rangle\simeq \mathbb{Z}/2n\mathbb{Z}$ on $G_{m,n}$ corresponds to promotion of staircase plane partitions on these basis elements.
\end{itemize}
\end{conj}

We now explain the relationship of \cref{conj:gao} to the cyclic sieving conjecture, \cref{conj:main}.

\begin{lemma} \label{lem:char_comp}
If \cref{conj:gao} is true for some $n, m \geq 1$, then \cref{conj:main} is true for that same $m$ and for $P=\delta_{n-1}$ a staircase of size $n-1$.
\end{lemma}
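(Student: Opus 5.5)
Assume \cref{conj:gao} holds for $n,m$, and let $\{b_\pi\}_{\pi\in\mathcal{PP}^m(\delta_{n-1})}$ be the electrical canonical basis of $G_{m,n}$. The plan is to compute the trace of $c^k$ on $G_{m,n}$ in two ways: combinatorially as a fixed-point count, and representation-theoretically as a character value that specializes to $\Omega_{\delta_{n-1}}(m;q)$.

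\textbf{Combinatorial side.} Since $c$ permutes the basis according to $\mathrm{Pr}$, the trace of $c^k$ on $G_{m,n}$ is the number of $\pi$ with $\mathrm{Pr}^k(\pi)=\pi$. If $c$ only permutes the basis up to scalars (signs), I will first check that these scalars can be normalized away. The hope is that $c^{2n}$ acts as the identity on $G_{m,n}$, so that the permutation representation is honest. By \cref{thm:striker_williams}, $\mathrm{Pr}$ and $\mathrm{Row}$ are conjugate, so this is also $\#\mathrm{Fix}(\mathrm{Row}^k)$. Here $r+2=2(n-1)-1+2$ and $\kappa=2$, so the cyclic group has order $\kappa(r+2)=2n$, matching the order of $c$.

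\textbf{Algebraic side.} By \cref{bychkovisotropic}, $\mathcal{X}_n\cong \mathrm{LG}(n-1,V)$ with $\dim V=2n-2$. By Borel--Weil, $G_{m,n}$ is the dual of the irreducible $\mathrm{Sp}(V)$-module of highest weight $m\omega_{n-1}$. Next, $c$ preserves $V$: it sends $w_i$ to $w_{i-1}$, with the wraparound handled by the $(-1)^n$ twist. I will check directly that $c|_V$ preserves $\Lambda_{2n-2}$, possibly up to a scalar that can be absorbed. Thus the action of $c$ on $G_{m,n}$ is the action of an element $g\in \mathrm{Sp}(V)$, possibly after rescaling by a scalar that acts by a known root of unity in degree $m$. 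The trace of $c^k$ is then a character value $\chi_{m\omega_{n-1}}(g^k)$, possibly dualized.

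\textbf{Eigenvalues and specialization.} The main computation is to find the eigenvalues of $c|_V$. Since $c^{2n}=\pm1$ on $\mathbb{C}^{2n}$ and $V$ is the complement of a $2$-dimensional invariant piece (the kernel-type vectors), I expect the eigenvalues to be $\zeta^{\pm1},\zeta^{\pm2},\dots,\zeta^{\pm(n-1)}$ for $\zeta$ a primitive $2n$th root of unity, up to an overall twist. Then $g^k$ is conjugate to the torus element $\mathrm{diag}(\zeta^{k},\dots,\zeta^{(n-1)k})$, and the trace is the principal specialization of the symplectic character $\mathrm{Sp}_{2n-2}(m\omega_{n-1};q,q^2,\dots,q^{n-1})$ at $q=\zeta^k$, up to a power of $q$. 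Proctor's formula~\cite{proctor1988odd,proctor1990symmetric} identifies this specialization, up to a monomial factor, with $\prod_{1\le i\le j\le n-1}\frac{1-q^{i+j+2m}}{1-q^{i+j}}=\Omega_{\delta_{n-1}}(m;q)$ from \eqref{eqn:q-multi-cat}. The same formula shows that this is a polynomial with nonnegative coefficients, giving the polynomiality part of \cref{conj:main}. Equating the two traces yields the cyclic sieving.

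\textbf{Main obstacle.} The hardest step is the bookkeeping: pinning down the precise eigenvalues of $c$ on $V$ and all the sign and monomial factors. These come from the $(-1)^n$ twist, from possibly non-symplectic scaling, and from dualization. I must show that these factors all cancel at $q=\zeta^k$ so that the character value equals $\Omega_{\delta_{n-1}}(m;\zeta^k)$ exactly. I would first settle this by computing small cases, $n=2,3$, and then argue in general via the explicit tridiagonal form of $\Lambda_{2n-2}$ and the shift structure of $c$.
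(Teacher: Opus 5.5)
Your plan is essentially the paper's proof: equate $\mathrm{tr}(c^k\mid G_{m,n})=\#\mathrm{Fix}(\mathrm{Pr}^k)$ with a principally specialized symplectic character and invoke Proctor's formula. The bookkeeping you defer works out as you hope: $c\Lambda_{2n-2}c^T=-\Lambda_{2n-2}$ puts $ic$ in $\mathrm{Sp}(2n-2,\mathbb{C})$ with characteristic polynomial $(\lambda^{2n}-1)/(\lambda^2-1)$, hence eigenvalues $\zeta^{\pm1},\dots,\zeta^{\pm(n-1)}$, and the scalar factor $i^{km(n-1)}=\zeta^{km\binom{n}{2}}$ exactly cancels Proctor's monomial $\zeta^{-km\binom{n}{2}}$.

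There are two small slips. First, $\delta_{n-1}$ has rank $r=n-2$ (your expression $2(n-1)-1+2$ is off), and that is what gives $\kappa(r+2)=2n$. Second, for $n\le 2$ the poset is empty or a point, so $\kappa=1$, which is why the paper treats those cases separately.
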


\begin{proof}
The cases $n=1,2$ are straightforward, so let us assume for convenience that~$n \geq 3$ so that the desired cyclic sieving polynomial is given by~\eqref{eqn:q-multi-cat}. Because of~\cref{thm:striker_williams}, it is equivalent to prove the cyclic sieving result for promotion instead of rowmotion.

Since the conjectured electrical canonical basis is permuted by this cyclic action $\langle c\rangle \simeq \mathbb{Z}/2n\mathbb{Z}$ according to promotion, the trace of $c^k$ acting on $G_{m,n}$ is equal to the number of basis elements fixed by $c^k$, i.e., the number of plane partitions in $\mathcal{PP}^m(\delta_{n-1})$ fixed by $\mathrm{Pr}^{k}$. But the trace of $c^k$ can also be computed through the character theory of the symplectic group. In the $\beta$ basis, $c$ can be expressed as
\[
c = \begin{bmatrix}
    0 & 1 & 0 & -1 & \cdots & 0& (-1)^{n}\\
    1 & 0 & 0 &0&\cdots&\vdots&0\\
    0 & 1 & 0 &0&\cdots &\vdots&0\\
    0& 0 & 1 &0&\cdots &\vdots&0\\
    \vdots & \vdots &\vdots &\vdots &\ddots & \vdots& \vdots\\
    0 & 0 & 0 &0&\cdots& 1 & 0&\\
\end{bmatrix}
\]
It is simple to check that $c\Lambda_{2n-2} c^T = -\Lambda_{2n-2}$. Thus, $ic \in \mathrm{Sp}(2n-2, \mathbb{C})$, the symplectic group. Cofactor expansion along the top row of $ic-\lambda I_{2n-2}$ gives a characteristic polynomial of $\frac{\lambda^{2n}-1}{\lambda^2-1}$ and thus $ic$ has eigenvalues 
\[\zeta^{\pm 1}, \zeta^{\pm 2}, \dots, \zeta^{\pm (n-1)}\]
where $\zeta = e^{\frac{2\pi i}{2n}}$. The lift of scalar multiplication by $i$ to $G_{m,n}$ has trace $i^{-m(n-1)}$, so the trace of $c^k$ acting on $G_{m,n}$ is $i^{km(n-1)}$ multiplied by the symplectic character specialization $\mathrm{Sp}_{2n-2}(m\omega_{n-1} ; \zeta^k, \zeta^{2k}, \dots, \zeta^{(n-1)k})$. By a result of Proctor~\cite[Theorem 1, case (CGI)]{proctor1990symmetric} (see also~\cite{proctor1988odd}), we have
\begin{align*}
\mathrm{Sp}_{2n-2}(m\omega_{n-1} ; q,q^2,q^3,\dots,q^{n-1}) &= q^{-m\binom{n}{2}}\prod_{1\leq i \leq j \leq n-1} \frac{ 1-q^{i+j+2m}}{1-q^{i+j}}
\end{align*}
and thus
\[
\mathrm{tr}(c^k) = i^{km(n-1)}\zeta^{-km\binom{n}{2}}\prod_{1\leq i \leq j \leq n-1} \frac{ 1-\zeta^{k(i+j+2m)}}{1-\zeta^{k(i+j)}} = \Omega_{\delta_{n-1}}(m;\zeta^k)
\]
as desired.
\end{proof}

In the case $m=1$ the basis of grove coordinates will satisfy \cref{conj:gao}. For the case $m=2$, Gao--Lam--Xu~\cite{gao2025electrical} introduced the \dfn{bush basis} $\{B_\xi\}_{\xi \in \mathcal{TC}(2n)}$ of~$G_{2,n}$ indexed by the set $\mathcal{TC}(2n)$ of $3$-noncrossing perfect matchings of $[2n]$. It is similar to the basis of Temperley--Lieb immanants for the degree two part of the usual Grassmannian. They give a combinatorial formula for the expansion of degree two grove monomials into the bush basis.

\begin{thm}[{Cf.~\cite[Theorem~4.4]{gao2025electrical}}] \label{thm:bushbasisexpansion}
For $\sigma,\sigma' \in \mathcal{NP}(n)$, and any electrical network $e$,
\[
L_\sigma(e)L_{\sigma'}(e) = \sum_{\xi \in \mathcal{TC}(2n)}  a_{\sigma,\sigma',\xi }B_\xi(e)
\]
where $a_{\sigma,\sigma',\xi}$ counts valid opposite loopless resolutions of $\xi$ resulting in $\sigma,\sigma'$.
\end{thm}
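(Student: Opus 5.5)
Since this is quoted from Gao--Lam--Xu, the plan is to reconstruct their argument as a superposition (double-dimer type) identity, modeled on Kenyon--Wilson and on Lam's treatment of Temperley--Lieb immanants for $\mathrm{Gr}(k,n)$. Both sides are polynomial functions on $E_n$ and hence on its Zariski closure $\mathcal{X}_n$, so it suffices to prove the identity for a fixed circular planar network $e=e(\Gamma,\omega)$. After the usual rescaling, each grove coordinate $L_\sigma(e)$ is a weighted sum over groves of $\Gamma$ whose boundary connectivity is $\sigma$. A grove here is a spanning forest in which every component meets the boundary.

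The first step is to expand $L_\sigma(e)L_{\sigma'}(e)$ as a weighted sum over ordered pairs $(F,F')$ of groves, with $F$ of type $\sigma$ and $F'$ of type $\sigma'$. Next I will superimpose each pair into an edge multiset on $\Gamma$. Passing through Temperley's trick, i.e.\ Lam's map $N$ to a plabic graph with $2n$ boundary vertices, turns a pair of groves into a pair of dimer covers. Their union is then a union of doubled edges, closed loops, and paths pairing the $2n$ boundary vertices.

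I will define $\xi(F\cup F')$ to be the matching of $[2n]$ formed by these paths. I then need to check three things about it.
\begin{enumerate}
\item It is $3$-noncrossing. This should follow from planarity together with the fact that only two layers are superimposed. Concretely, each layer's path system is noncrossing, so three mutually crossing chords would force two of them into the same layer.
\item A superposition determines its pair only up to choices along its components. Each closed loop and each path can be assigned to the two layers in two ways, subject to the constraint that the layers remain groves.
\item The ``opposite loopless'' condition is exactly the condition that the two layers are genuinely groves of the prescribed types. That is, no loops are created, and the boundary parities alternate correctly.
\end{enumerate}

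With these in hand, I set $B_\xi(e)$ to be the weighted sum over superpositions of type $\xi$, where loops are weighted as in the $\mathrm{SL}_2$/Temperley--Lieb normalization. Regrouping the pair expansion by the superposition $S=F\cup F'$ then gives
\[
L_\sigma L_{\sigma'}=\sum_{S}\omega(S)\,\#\{\text{resolutions of }S\text{ into types }(\sigma,\sigma')\}.
\]
The key claim is that the number of resolutions depends only on $\xi(S)$, and in fact equals $a_{\sigma,\sigma',\xi}$. The reason is that a resolution is determined by how the boundary paths are split between the two layers, together with the local choices at the boundary that the definition of valid opposite loopless resolutions of $\xi$ encodes. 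Granting this, the identity follows immediately.

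For later use I will also show two further facts.
\begin{itemize}
\item The $B_\xi$ form a basis of $G_{2,n}$. The matrix $(a_{\sigma,\sigma',\xi})$ is unitriangular with respect to a suitable order, so the $B_\xi$ span; and $\#\mathcal{TC}(2n)=\dim G_{2,n}$ by the Proctor/Sundaram count.
\item $c$ permutes the $B_\xi$ by rotation of $\xi$. This follows because $c$ acts on the $L_\sigma$ by Kreweras complement and the resolution counts are rotation-equivariant.
\end{itemize}

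The main obstacle is the claim that the resolution count depends only on $\xi$. It requires controlling the interaction between interior loops and the grove (rather than dimer) constraint. Free loop orientations could fail to preserve the forest condition, which is exactly why the word ``loopless'' appears in the definition. My first attempt would be to work entirely on the Temperley plabic graph, where the double-dimer theory is standard. There I would verify that the grove condition corresponds to forbidding certain loop configurations in a manner that is local to the boundary matching.
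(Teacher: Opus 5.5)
The paper gives no proof of this statement; it cites it from Gao--Lam--Xu. So your outline has to stand on its own, and it has a genuine gap at the very first step: the superposition model you chose cannot produce the index set $\mathcal{TC}(2n)$.

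\textbf{The double-dimer model on $N(e)$.} In a double-dimer superposition on the plabic graph $N(e)$, a boundary vertex is the endpoint of a path exactly when it lies in $I\triangle J$, where $I,J\in\binom{[2n]}{n-1}$ are the boundary sets of the two covers. Since $|I\triangle J|\le 2n-2$, the paths never pair all $2n$ boundary vertices. Moreover, vertex-disjoint paths in a planar graph give a \emph{noncrossing} pairing. (Lifting a grove to a dimer cover is also not canonical: it needs a choice of $I$ concordant with its type.) What this model reconstructs is the Temperley--Lieb immanant expansion of $\Delta_I\Delta_J$ on $\mathrm{Gr}(n-1,2n)$, restricted to $\mathcal{X}_n$, where those functions become linearly dependent. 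That is not the bush basis.

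Your argument for $3$-noncrossingness in step (1) also does not fit this model, because double-dimer paths alternate between the two covers rather than lying in one ``layer.'' Even granting layers, you would only obtain matchings whose crossing graph is bipartite, a proper subset of $\mathcal{TC}(2n)$. For example, $\{1,8\},\{2,5\},\{3,10\},\{4,7\},\{6,9\}$ has no three mutually crossing chords, but its crossing graph is a $5$-cycle.

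\textbf{Where crossings come from.} The crossings, and the word ``opposite,'' point to a different picture. A grove $F$ corresponds to a loopless smoothing of the medial graph of $\Gamma$: one smoothing at the crossing of $e$ if $e\in F$, the other if $e\notin F$. Its boundary connectivity is the noncrossing perfect matching of $[2n]$ attached to the type of $F$. To superpose $F$ and $F'$, smooth where they agree and keep the crossing where they disagree. The weight $\prod_e\omega_e^{[e\in F]+[e\in F']}$ then depends only on this diagram, and $(F,F')$ is recovered by choosing complementary smoothings at the kept crossings.

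\textbf{The key claim fails.} Even in this corrected model, your key claim, that the resolution count depends only on the boundary matching, is false. Take $n=2$ and let $\Gamma$ be two parallel edges $e_1,e_2$ joining the two boundary nodes.
\begin{itemize}
\item The superposition of $\{e_1\}$ and $\{e_2\}$ is a bigon whose boundary matching equals that of the empty grove. It has two opposite loopless resolutions, both giving the pair (one-block partition, one-block partition). The other two choices are excluded because $\{e_1,e_2\}$ contains a cycle.
\item The superposition of two empty groves has the same boundary matching, but a single resolution, giving the pair of all-singleton partitions.
\end{itemize}
So raw superpositions must first be reduced by local relations (bigons, closed curves, triple crossings), and $B_\xi$ must be defined to match. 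This reduction is exactly where the $3$-noncrossing index set and the actual definition of $B_\xi$ come from. In other words, it is the substance of the theorem, and your plan simply grants it. Weighting loops by $2$ is also out of place here, since closed loops are precisely what ``loopless'' excludes.
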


For our purposes, we do not need the precise definition of a valid opposite loopless resolution except to note that simultaneously rotating $\xi$ and applying Kreweras complement to $\sigma$ and to $\sigma'$ preserves their number. Thus, the induced cyclic action of $\langle c \rangle\simeq \mathbb{Z}/2n\mathbb{Z}$ on $G_{2,n}$ corresponds in the bush basis to rotation of the indexing $3$-noncrossing perfect matchings. Hence, by combining \cref{thm:promisrot} and \cref{lem:pp_dyck_bij} with these observations, we conclude:

\begin{lemma} \label{lem:bush}
The bush basis of $G_{2,n}$ satisfies the conditions of \cref{conj:gao}.
\end{lemma}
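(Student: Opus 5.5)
We check the two conditions of \cref{conj:gao} for $m=2$ directly: the bush basis is a basis of $G_{2,n}$ indexed by height two staircase plane partitions of size $n-1$, and the action of $c$ permutes it compatibly with promotion.

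\emph{Indexing.} By \cite{gao2025electrical}, $\{B_\xi\}_{\xi\in\mathcal{TC}(2n)}$ is a basis of $G_{2,n}$ indexed by $3$-noncrossing perfect matchings of $[2n]$. We transport this indexing to $\mathcal{PP}^2(\delta_{n-1})$ in two steps. First, $\Phi\colon\mathcal{PP}^2(\delta_{n-1})\to\mathcal{D}^2_n$ is a bijection to $2$-fans of Dyck paths of semilength $n$ (\cref{lem:pp_dyck_bij}). Second, \cref{thm:promisrot} gives a bijection from $2$-fans to $\mathcal{TC}(2n)$. Call the composite $\Theta\colon\mathcal{PP}^2(\delta_{n-1})\to\mathcal{TC}(2n)$, and index the basis by $\pi\mapsto B_{\Theta(\pi)}$.

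\emph{Equivariance.} \Cref{lem:pp_dyck_bij} gives $\Phi\circ\mathrm{Pr}=\mathrm{Pr}\circ\Phi$, and \cref{thm:promisrot} gives equivariance of fan promotion with rotation. Hence $\Theta\circ\mathrm{Pr}=\mathrm{rot}\circ\Theta$. It remains to see that $c$ acts on $G_{2,n}$ by $c\cdot B_\xi=B_{\mathrm{rot}(\xi)}$.

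For this we use \cref{thm:bushbasisexpansion}. We already know $c$ permutes the degree one grove coordinates by $L_\sigma\mapsto L_{\mathrm{Krew}(\sigma)}$, so $c\cdot(L_\sigma L_{\sigma'})=L_{\mathrm{Krew}\sigma}L_{\mathrm{Krew}\sigma'}$. Expanding both sides in the bush basis and using the invariance
\[
a_{\sigma,\sigma',\xi}=a_{\mathrm{Krew}\sigma,\mathrm{Krew}\sigma',\mathrm{rot}\,\xi},
\]
we see that the linear map $B_\xi\mapsto B_{\mathrm{rot}\xi}$ agrees with $c$ on every degree two grove monomial. These monomials span $G_{2,n}$, since the algebra is generated in degree one. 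The coefficient matrix $(a_{\sigma,\sigma',\xi})$ has full column rank, because the monomials span and $\{B_\xi\}$ is a basis. Therefore $c$ coincides with $B_\xi\mapsto B_{\mathrm{rot}\xi}$ on all of $G_{2,n}$.

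\emph{Main obstacle: conventions.} One must check that the directions and normalizations match. Specifically: the rotation on matchings in \cref{thm:promisrot} must be the same one corresponding to the Kreweras complement convention induced by $c$; a single generator must act by exactly one rotation step, with no extra scalar signs from the $(-1)^n$ twist in $c$; and promotion must correspond to $c$ rather than $c^{-1}$.

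If the directions turn out opposite, this is harmless for our purposes. Promotion and its inverse generate the same cyclic group, so replacing the generator of $\langle c\rangle$ by its inverse fixes the mismatch, and the conditions of \cref{conj:gao} are unaffected.

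Possible signs are more delicate. We would argue that on the grove coordinates $c$ acts by honest permutation, as the concordance statement asserts, so no signs appear in degree one. The bush basis expansion has nonnegative integer coefficients, so no signs are introduced in degree two either, and $c$ permutes the $B_\xi$ without signs.
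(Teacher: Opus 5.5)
Your proposal is correct and follows essentially the same route as the paper: you index the bush basis by $\mathcal{PP}^2(\delta_{n-1})$ via $\Phi$ and the bijection of \cref{thm:promisrot}, and you deduce that $c$ permutes the $B_\xi$ by rotation from three inputs, namely the invariance of $a_{\sigma,\sigma',\xi}$ under simultaneous Kreweras complement and rotation, the identity $c\cdot L_\sigma = L_{\mathrm{Krew}(\sigma)}$, and generation in degree one. You spell out the spanning argument and the direction/sign bookkeeping that the paper leaves implicit, though the full-column-rank remark and the appeal to nonnegativity of the coefficients are unnecessary, since your displayed computation already gives $c\cdot B_\xi = B_{\mathrm{rot}(\xi)}$ exactly once $c$ permutes the $L_\sigma$ without signs.
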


Of course, \cref{lem:char_comp,lem:bush} together imply \cref{thm:main}, completing the proof of our main result.

\begin{remark}
From the above, it follows that (for $n \geq 2$) the order of $\mathrm{Row}$ acting on~$\mathcal{PP}^2(\delta_n)$ is~$2(n+1)$. In fact, as suggested by \cref{conj:main}, it should be that, for all $m \geq 1$, $\mathrm{Row}^{n+1}$ acts on $\mathcal{PP}^m(\delta_n)$ as the order two automorphism, i.e., transposition. But because we work with promotion and not directly with rowmotion, we cannot immediately conclude this, even in this case $m=2$.
\end{remark}

\section{Future directions} \label{sec:future}

We conclude with a brief discussion of some future directions. 

\subsection{Electrical canonical basis in all degrees}
Of course, the most enticing future direction would be to try to define an electrical canonical basis satisfying \cref{conj:gao} in all degrees. We do not have any concrete proposal for that, although the development of spin webs in~\cite{bodish2024spin} may provide a first step.

\subsection{Type B/C duality} 
The exceptional isomorphism between $B_2$ and $C_2$ allowed us to show, using the crystal perspective, that promotion of staircase plane partitions of height two is equivalent to rotation of $3$-noncrossing perfect matchings. It is not true that promotion of higher height staircase plane partitions is equivalent to rotation of higher noncrossing perfect matchings. Indeed, the numbers of $(k+1)$-noncrossing perfect matchings and staircase plane partitions of height $k$ are not the same for $k > 2$.\footnote{In fact, unlike for staircase plane partitions, there is no product formula at all for the number of $(k+1)$-noncrossing perfect matchings for $k > 2$.} Nevertheless, there is still a mysterious ``duality'' between type $B$ and type $C$ related to staircase plane partitions of higher heights. Namely, these plane partitions arise as indexing sets both for a basis of the space of invariants of a tensor power of spin representations (a type~$B$ thing), and for a basis of a component of the coordinate ring of the Lagrangian Grassmannian (a type~$C$ thing). Bruce Westbury previously asked if this numerical coincidence had any deeper algebraic meaning in a MathOverflow question~\cite{westbury2011symplectic}. Understanding this duality better might help with defining the electrical canonical basis. Is this an avatar of Howe duality?

\subsection{Other homogeneous varieties}
Finally, we note that the general paradigm we have followed here, where we realize a cyclic action on a combinatorial set as an appropriate cyclic action on a nice basis of the coordinate ring of a homogeneous variety, could possibly also be followed to resolve other cases of \cref{conj:main}. Standard Monomial Theory~\cite{seshadri2014intro} explains how $P$-partitions for various posets $P$ often index bases of such coordinate rings. In particular, shifted staircase plane partitions index a basis of the coordinate ring of the (maximal) orthogonal Grassmannian. But, just as we needed here the ``right version'' of the Lagrangian Grassmannian (namely, the space of electrical networks), it might also be important to choose the right version of the orthogonal Grassmannian. A natural candidate is the Ising model as studied by Galashin and Pylyavskyy~\cite{galashin2020ising}, which also gives a positive part to the orthogonal Grassmannian. Indeed, we believe this paradigm implicitly relies on the important, but somewhat mysterious, connection between cyclic symmetry and total positivity (see, e.g.,~\cite{karp2019moment}).

\bibliography{main}{}
\bibliographystyle{abbrv}

\end{document}